\newcommand*{\B}[1]{\ifmmode\bm{#1}\else\textbf{#1}\fi}
\numberwithin{equation}{section}
\newcommand{\eqdef}{\stackrel{d}{=}}
\newcommand{\eqdistn}{\stackrel{\mathcal{L}}{=}}
\newcommand{\bX}{\mathbf{X}}
\newcommand{\bY}{\mathbf{Y}}
\newcommand{\bW}{\mathbf{W}}
\newcommand{\bZ}{\mathbf{Z}}
\newcommand{\Zd}{\mathbb{Z}^d}
\newcommand{\SaS}{\text{S}\alpha \text{S}}
\newcommand{\sas}{S \alpha S}
\newtheorem{thm}{Theorem}[section]
\newtheorem{remark}[thm]{Remark}
\newtheorem{propn}[thm]{Proposition}
\newtheorem{defn}[thm]{Definition}
\newtheorem{cor}[thm]{Corollary}
\def\bbR{\mathbb R}
\def\bbZ{\mathbb Z}
\def\P{\mathbb P}
\def\E{\mathbb E}
\theoremstyle{definition}
\definecolor{darkgreen}{rgb}{0,0.5,0}
\definecolor{purple}{rgb}{0.3,0.1,0.3}
\newcommand{\kibitz}[2]{\ifnum\Comments=1\textcolor{#1}{#2}\fi}
\begin{document}
%



   \author{Snigdha Panigrahi}
   \address{Department of Statistics, University of Michigan, Ann Arbor, MI 48109, USA}
   \email{psnigdha@umich.edu}


   \author{Parthanil Roy}

   \address{Theoretical Statistics and Mathematics Unit, Indian Statistical Institute, Bangalore 560059, India}


   \email{parthanil.roy@gmail.com}


   \author{Yimin Xiao }

   \address{Department of Statistics and Probability, Michigan State University, East Lansing, MI 48824-1027, USA}


   \email{xiao@stt.msu.edu}


   \title{Maximal Moments and Uniform Modulus of Continuity for Stable Random Fields}


\begin{abstract}
In this work, we solve an open problem mentioned in \cite{xiao:2010} and provide sharp bounds on the rate of growth of maximal moments for stationary symmetric stable random fields using structure theorem of finitely generated abelian groups and ergodic theory of quasi-invariant group actions. We also investigate the relationship between this rate of growth and the path regularity properties of self-similar stable random fields with stationary increments, and establish uniform modulus of continuity of such fields. In the process, a new notion of weak effective dimension is introduced for stable random fields and is connected to maximal moments and path properties. Our results establish a boundary between shorter and longer memory in relation to H\"older continuity of S$\alpha$S random fields confirming a conjecture of \cite{samorodnitsky:2004a}.
\end{abstract}

   \subjclass[2010]{Primary 60G52, 60G60, 60G17; Secondary 60G70, 60G18, 37A40}

   \keywords{Random field, stable process, uniform modulus of continuity, chaining method, extreme value theory,
   nonsingular group actions, effective dimension}

 \thanks{The research of P. Roy was partially supported by the project RARE-318984
(a Marie Curie FP7 IRSES Fellowship), a SERB grant MTR/2017/000513 and Cumulative Professional Development Allowance at Indian Statistical Institute. The research of Y. Xiao was partially supported by NSF
grants DMS-1612885 and DMS-1607089. }

 \dedicatory{Dedicated to the memory of Professor Wenbo Li}



 \maketitle



\section{Introduction}

A real-valued stochastic process $\{X(t): t \in \mathbb{T}^d\}$ ($\mathbb{T} = \mathbb{Z}$ or $[0,1]$ or $\mathbb{R}$) is
called a \emph{symmetric $\alpha$-stable} ($\sas$) random field if each of its finite linear combination follows an $\sas$ distribution. In general, the parameter $\alpha$ satisfies $0 < \alpha \leq 2$, although in this paper, we
assume our random fields to be non-Gaussian and therefore $0 < \alpha < 2$. See, for example,
\cite{samorodnitsky:taqqu:1994} for detailed discussions on non-Gaussian stable distributions and processes.

Sample path continuity and H\"older regularity of stochastic processes and random fields have been studied
for many years. The main tool behind such investigation has been a powerful chaining argument that is mainly
applicable to Gaussian and other light-tailed processes; see  \cite{Adler_Taylor_2007},
\cite{khoshnevisan2002multiparameter},  \cite{marcus2006markov},
\cite{talagrand2006generic}. Recently, there has been a significant interest in establishing uniform modulus
of continuity of sample paths for stable and other non-Gaussian infinitely divisible processes; see, for instance,
\cite{ayache:roueff:xiao:2009}, \cite{bierme2009holder, bierme2015modulus}, \cite{xiao:2010}.

Motivated by \cite{kono:maejima:1991a}, \cite{xiao:2010} modified the existing chaining argument and
made it amenable to heavy-tailed random fields. This technique uses estimates of the lower order moments
of the maximum increments over the two consecutive steps of the chain to obtain a uniform modulus of continuity
for stable and other heavy-tailed random fields.

In this context, it was stated in \cite{xiao:2010} (see Page 173 therein) that for a
stationary $\alpha$-stable sequence $\{\xi_k: k \geq 1\}$, it is an open problem to give sharp upper and lower
bounds for the maximal moment sequence $\E\big(\max_{1 \leq k \leq n} |\xi_k|^\gamma\big)$ for $\gamma \in (0, \alpha)$. \cite{xiao:2010} also presented two approaches of partial solution to this open problem: one using results of \cite{talagrand2006generic} in this setup and another one based on his own improvement of Talagrand's results (more specificallty, Lemma~3.5 of \cite{xiao:2010}). However both of these methods lead to weaker path continuity results and we have been able to improve them significantly in this paper as described below.

We have solved the aforementioned open problem (see Theorems \ref{thm1:maximal_moment}, \ref{weak:eff:dim} and \ref{thm3:maximal_moment} below) of deriving sharp bounds on the moments of the maximal process for stationary $\sas$ discrete-parameter random fields having various generic dependence structures based on ergodic and group theoretic properties of the underlying \emph{nonsingular} (also known as \emph{quasi-invariant}) group action. The maximum is taken over usual $d$-dimensional hypercubes of side-length increasing to infinity. Our machineries include structure theorem for finitely generated abelian groups, ergodic theory of quasi-invariant actions on $\sigma$-finite standard measure spaces, and a new notion of weak effective dimension introduced in this paper. This work easily extends to the continuous parameter case (see Theorem~\ref{cont:parameter:gom} and Remark~\ref{remark:last}) provided the random field is measurable and stationary.

Solution to the open problem in the discrete-parameter case allows us to prove results on uniform modulus
of continuity for a large class of self-similar $\sas$ random fields with stationary increments; see Section~\ref{sec:path:properties}. To this end, we have introduced a novel notion, namely that of \emph{weak effective dimension}, for stable random fields; see Definition~\ref{weak:eff:dim:defn} below. This notion encompasses the concept of effective dimension (defined by \cite{roy:samorodnitsky:2008}) as a special case and connects naturally to maximal moments (see Theorem~\ref{weak:eff:dim}) and path properties (see Corollary~\ref{label-result in case of weak effective dimension}) of stable random fields. In some sense, our new notion is better than the effective dimension, which is always an integer (and hence more restrictive) whereas weak effective dimension need not be so.

Based on the ergodic theoretic properties of the underlying nonsingular group action, \cite{samorodnitsky:2004a}
obtained a phase transition boundary for the partial maxima of stable processes. It was also conjectured in this work that many other important phase transitions for stable processes should occur at this boundary. While this conjecture has been established for  ruin probabilities (see \cite{mikosch:samorodnitsky:2000a}), growth of maxima (see \cite{roy:samorodnitsky:2008}, \cite{roy:2010b}), extremal point processes (see \cite{resnick:samorodnitsky:2004}, \cite{roy:2010a}), large deviations issues (see \cite{fasen:roy:2016}), statistical aspects (see \cite{bhattacharya:roy:2018}), etc., the effects of this transition boundary on path properties have not yet been explored.

In this work, we bridge this gap and establish that the uniform modulus of continuity does change significantly at this boundary (see Section~\ref{sec:path:properties}) and also at the group theoretic boundaries obtained by \cite{roy:samorodnitsky:2008}. This confirms the aforementioned conjecture of \cite{samorodnitsky:2004a} (see Page 1440 therein and also the discussions in the beginning of Page~174 in \cite{xiao:2010}) in the context of path behaviours of stable random fields. More specifically, the confirmation of this conjecture is attained through Corollaries \ref{label-result in case of dissipative flow}, \ref{label-result in case of effective dimension} and \ref{label-result in case of weak effective dimension}, all of which follow from Theorem~\ref{label-result in case of tightness} below.

We would also like to mention that our bounds (on both growth-rate of maximal moments as well as uniform modulus of continuity) are significantly better than the existing ones for stable random fields that are not full-dimensional (see Definition~\ref{weak:eff:dim:defn} below) to the extent that we improve the leading (polynomial) term of these bounds. On the other hand, in the full-dimensional case (mixed moving average, for example) the improvement is in the logarithmic term albeit nontrivial.

This paper is organized as follows. In Section~\ref{sec:prelim}, we recall a result of \cite{xiao:2010},
explain how it naturally leads to a problem on rate of growth of the maximal moment sequences and describe the ergodic theoretic and group theoretic connections to this extreme value theoretic problem. Subsection~\ref{sec:contri} contains a brief summary of the contributions of our work. In Section~\ref{sec:discrete}, we state the results on the
asymptotic behavior of the maximal moments of stationary
$\sas$ random fields as the index parameter runs over $d$-dimensional hypercubes of
increasing edge-length even though the proofs are deferred to Section~\ref{sec:proofs} to increase the readability of this paper. In Section~\ref{sec:path:properties}, we establish results on uniform modulus
of continuity for self-similar $\sas$ random fields whose first order increments are stationary. Two important
examples of fractional stable processes are discussed in Section~\ref{sec:examples}. Finally in Appendix~\ref{Appendix}, we present a result on the growth-rate of maximal moments in the continuous parameter case.

Throughout this paper,  we will use $K$ to denote a  positive and finite
constant which may differ in each occurrence, even in two consecutive ones. Some specific constants
will be denoted by $c, c_1, c_2, \ldots, K_{1}, K_2, \ldots$, etc. For two sequences of nonzero real numbers $\{a_n\}_{n\in\mathbb{N}}$ and $\{b_n\}_{n\in\mathbb{N}}$ the
notation $a_n \sim b_n$ means $a_n/b_n \to 1$ as $n \to \infty$. For $u, v \in \mathbb{R}^d$, $u = (u_{1}, u_{2}, \ldots, u_{d}) \leq v =(v_{1},
v_{2}, \ldots, v_{d})$ means $u_{i} \leq v_{i}$ for all $i=1, 2, \ldots , d$. The vectors
$\mathbf{0}=(0,0,\ldots,0)$, $\mathbf{1}=(1,1,\ldots,1)$ are elements of $\mathbb{Z}^d$. 
We shall abuse the notation and use $[u, v]$ to denote the set $\{t \in \mathbb{Z}^d: u \leq t \leq v\}$ or the set
$\{t \in \mathbb{R}^d: u \leq t \leq v\}$ depending on the context (the former notation is used throughout the main body of the paper while latter one is used only in the appendix). For $\alpha \in (0, 2)$ and a $\sigma$-finite standard measure space $(S,\mathcal{S}, \mu)$, we define the space $L^{\alpha}(S,\mu):=\left\{f:S\to\mathbb{R} \mbox{ measurable}:
\|f\|_\alpha <\infty \right\}$, where $\|f\|_\alpha:=\left(\int_S|f(s)|^{\alpha}\,\mu(ds)\right)^{1/\alpha}$. Note that $\|\cdot\|_\alpha$ is a norm if and only if $\alpha \in [1,2)$ making the corresponding $L^{\alpha}(S,\mu)$ a Banach space but not a Hilbert space.
For two random variables $Y$, $Z$, we write $Y\eqdistn Z$ if $Y$ and $Z$ are identically distributed.
For two stochastic processes $\{Y(t)\}_{t \in T}$ and $\{Z(t)\}_{t \in T}$, the notation $\{Y(t)\}  \eqdistn \{Z(t)\}$
(or simply $Y(t) \eqdistn Z(t)$, $t \in T$) means that they have the same finite-dimensional distributions.


\section{Preliminaries} \label{sec:prelim}
\subsection{A chaining argument for path properties}
We start with a brief description of the main result in \cite{xiao:2010}, which is built upon a modification
of the chaining arguments used in the proofs of Kolmogorov's continuity theorem, Dudley's entropy theorem
and other results on path regularity properties in the light-tailed situations; see \cite{Adler_Taylor_2007},
\cite{khoshnevisan2002multiparameter},  \cite{marcus2006markov}, \cite{talagrand2006generic}. To this end,
let  $\{X(t)\}_{t \in T}$ be a random field indexed by a compact metric space $(T, \rho)$, and let
$\{D_n: n\geq 1\}$ be a sequence (which is called a chaining sequence) of finite subsets
of $T$ satisfying the following conditions:
\begin{enumerate}
\item \label{property_finitely_many_nbrs}
There exists a positive integer $\kappa_0$ depending only on $(T,\rho)$ such that for every $\tau_n \in D_n$, the set
    \[
    O_{n-1}(\tau_n):=\{\tau_{n-1}^\prime \in D_{n-1}: \rho(\tau_n, \tau_{n-1}^\prime) \leq 2^{-n}\}
    \]
    has at most $\kappa_0$ many elements.

\item \label{property_chaining}  (The Chaining Property)
For every $s, t \in T$ with $\rho(s,t) \leq 2^{-n}$, there exist
two sequences $\{\tau_p(s): p \geq n\}$ and $\{\tau_p(t): p \geq n\}$ such that $\tau_n(s) = \tau_n(t)$ and, for every
$p \geq n$, $\tau_p(s), \tau_p(t) \in D_p$, $\rho(\tau_p(s),s) \leq 2^{-p}$,  $\rho(\tau_p(t),t) \leq 2^{-p}$, and
$\tau_p(s) \in O_p(\tau_{p+1}(s))$,  $\tau_p(t) \in O_p(\tau_{p+1}(t))$. If $s \in D:=\cup_{k=1}^\infty D_k$ (if $t \in D$),
then there exists an integer $q \geq 1$ such that $\tau_p(s)=s$ ($\tau_p(t)=t$, resp.) for all $p \geq q$.
\end{enumerate}


Note that Condition~\eqref{property_chaining} yields immediately that for each $n$, $T$ can be covered by open
balls with radius $2^{-n}$ and centers in $D_n$, and the set $\cup_{n \geq 1} D_n$ is dense in $T$. The following
result from \cite{xiao:2010} provides an upper bound for the uniform
modulus of continuity for a class of random fields, including those with heavy-tailed distributions.

\begin{propn}
\label{propn:xiao_2010}
Let $\mathbf{X}=\{X(t)\}_{t \in T}$ be a real-valued random field indexed by a compact metric space $(T, \rho)$ and
let $\{D_n: n \geq 1\}$ be a chaining sequence satisfying Conditions \eqref{property_finitely_many_nbrs} and
\eqref{property_chaining} above. Suppose $\sigma: {\mathbb R}_+ \to {\mathbb R}_+$ is a nondecreasing
continuous function which is regularly varying at the origin with index $\delta > 0$ (i.e., $\lim_{h \to 0+}\sigma(ch)/\sigma(h)
= c^\delta$ for all $c>0$). If there are constants $\gamma > 0$, and $K>0$ such that
\begin{equation}
\E \bigg(\max_{\tau_n \in D_n}\,\max_{\tau^\prime_{n-1} \in O_{n-1}(\tau_n)}|X(\tau_n)-X(\tau^\prime_{n-1})|^\gamma\bigg)
\leq K\, \big(\sigma(2^{- n})\big)^\gamma
\end{equation}
for all integers $n \geq 1$, then for all $\epsilon >0$,
\begin{equation}
\lim_{h \to 0+} \frac{\sup_{t \in T}\sup_{\rho(s,t)\leq h}|X(t)-X(s)|}{ \sigma(h) (\log{1/h})^{(1+\epsilon)/\gamma}} =0
\end{equation}
almost surely.
\end{propn}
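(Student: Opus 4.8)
The plan is to run the classical chaining argument adapted to the $\gamma$-th moment hypothesis: convert that hypothesis into a summable tail estimate by Markov's inequality, apply Borel--Cantelli to control \emph{all} the one-step increments along the chains simultaneously from some scale on, and then telescope along the two chains supplied by Condition~\eqref{property_chaining}. Regular variation of $\sigma$ with positive index is what makes the telescoped series converge at the right rate.

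Fix $\epsilon'\in(0,\epsilon)$ and set
\[
M_n:=\max_{\tau_n\in D_n}\ \max_{\tau'_{n-1}\in O_{n-1}(\tau_n)}|X(\tau_n)-X(\tau'_{n-1})|,
\qquad
\lambda_n:=\sigma(2^{-n})\,n^{(1+\epsilon')/\gamma}.
\]
By Markov's inequality and the hypothesis, $\P(M_n>\lambda_n)\le \E(M_n^{\gamma})/\lambda_n^{\gamma}\le K/n^{1+\epsilon'}$, which is summable in $n$; hence by the Borel--Cantelli lemma there is an a.s.\ finite random index $N$ with $M_n\le\lambda_n$ for all $n\ge N$.

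Next I would use the chaining property. Let $s,t\in D:=\bigcup_k D_k$ with $\rho(s,t)\le 2^{-n}$, and telescope along the chains $\{\tau_p(s)\}_{p\ge n}$ and $\{\tau_p(t)\}_{p\ge n}$ of Condition~\eqref{property_chaining}; these sums are finite because each point of $D$ is eventually fixed, and since $\tau_p(\cdot)\in O_p(\tau_{p+1}(\cdot))$ each increment $|X(\tau_{p+1}(s))-X(\tau_p(s))|$ is bounded by $M_{p+1}$. Using $\tau_n(s)=\tau_n(t)$ this gives
\[
|X(s)-X(t)|\ \le\ 2\sum_{p\ge n+1}M_p,
\]
which for $n\ge N-1$ is at most $2\sum_{p\ge n+1}\lambda_p$. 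Here regular variation enters: writing $\sigma(h)=h^{\delta}L(h)$ with $L$ slowly varying at $0$ and applying Potter's bound with an exponent $\eta\in(0,\delta)$, one has $\sigma(2^{-p})\le K\,2^{-(p-n)(\delta-\eta)}\sigma(2^{-n})$ for all $n$ large and all $p\ge n$, so $\sum_{p\ge n+1}\lambda_p$ is geometric-times-polynomial and is dominated by a constant multiple of its leading term:
\[
\sum_{p\ge n+1}\lambda_p\ \le\ K\,\sigma(2^{-n})\,n^{(1+\epsilon')/\gamma}.
\]
I expect this to be the main technical point of the argument: one must check that the polynomial factors $p^{(1+\epsilon')/\gamma}$ coming from the infinitely many scales of the chain do not accumulate, which is precisely what the geometric decay of $\sigma(2^{-p})$ (a consequence of $\delta>0$) provides; everything else is routine bookkeeping.

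Finally I would pass from dyadic levels to arbitrary $h$ and from $D$ to $T$. Since $D$ is dense in $T$ and the displays above show $X$ is (a.s.) uniformly continuous on $D$, $X$ extends to a continuous field on $T$ and the same estimate holds for all $s,t\in T$. Given small $h$, choose the integer $n$ with $2^{-n-1}<h\le 2^{-n}$; then $n\le K\log(1/h)$, and because $\sigma$ is nondecreasing and regularly varying, $\sigma(2^{-n})\le\sigma(2h)\le K\sigma(h)$ for all small $h$. Combining these,
\[
\sup_{t\in T}\ \sup_{\rho(s,t)\le h}|X(t)-X(s)|\ \le\ K\,\sigma(h)\,\big(\log(1/h)\big)^{(1+\epsilon')/\gamma}
\]
for all $h$ below some random threshold. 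Dividing by $\sigma(h)\big(\log(1/h)\big)^{(1+\epsilon)/\gamma}$ and using $\epsilon'<\epsilon$, so that the residual factor $\big(\log(1/h)\big)^{(\epsilon'-\epsilon)/\gamma}\to 0$ as $h\to0+$, yields the claimed limit; applying this along a sequence $\epsilon\downarrow0$ (a countable intersection of full-probability events) gives the statement for every $\epsilon>0$.
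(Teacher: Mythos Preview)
The paper does not include a proof of this proposition: it is quoted directly from \cite{xiao:2010} (see the sentence immediately preceding the statement, ``The following result from \cite{xiao:2010}\ldots''), and is used as a black box in Section~\ref{sec:path:properties}. So there is no proof in the paper to compare against.

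That said, your argument is the standard chaining proof and is essentially correct; it is very likely the same as the one in \cite{xiao:2010}. The key steps---Markov plus Borel--Cantelli to control $M_n$, telescoping along the two chains, and using Potter's bounds (from $\delta>0$) to sum $\sum_{p\ge n+1}\sigma(2^{-p})p^{(1+\epsilon')/\gamma}$ down to a constant times $\sigma(2^{-n})n^{(1+\epsilon')/\gamma}$---are all sound.

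One point to tighten: your passage from $D$ to $T$ is slightly glib. You have shown that $X|_D$ is a.s.\ uniformly continuous, hence extends uniquely to a continuous function $\widetilde X$ on $T$; but the conclusion is about $X$ itself on all of $T$, not about this extension. To close the gap you need either that $X$ has a continuous modification (which is what your argument really proves) or that $X$ is separable with separant $D$, so that suprema over $T$ and over $D$ agree. This is a standard caveat in Kolmogorov-type results and is usually handled by an implicit separability assumption or by stating the conclusion for a modification; just be explicit about which convention you are using.
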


In this paper, we will focus on studying the maximal moments of $\sas$ random fields indexed by $\mathbb Z^d$ or
$\mathbb R^d$ so that we can apply Proposition \ref {propn:xiao_2010} to self-similar $\sas$ random fields with
stationary increments. Recall that a  random
field $\{X(t)\}_{t \in \mathbb{R}^d}$ is called $H$-self-similar ($H>0$) if $\{X(ct)\}_{t \in \mathbb{R}^d}\eqdistn
 \{c^H X(t)\}_{t \in \mathbb{R}^d}$
for all $c>0$. 
$\{X(t)\}_{t \in \mathbb{R}^d}$ is said to have stationary increments if, $\{X(t+u) - X(u)\}_{t \in \mathbb{R}^d}
\eqdistn \{X(t) - X(0)\}_{t \in \mathbb{R}^d}$,  for each $u \in \mathbb{R}^d$.

Now, we  take $T=[0,1]^d$ with $\rho(s,t)=\max_{1 \leq i \leq d} |s_i - t_i|$, $D_n=\big\{2^{-n} u: u \in [\mathbf{1}, 2^n\mathbf{1}] \cap
\mathbb{Z}^d\big\}$ and apply Proposition~\ref{propn:xiao_2010} above to a self-similar $\sas$ random field $\{X(t)\}_{t \in
\mathbb{R}^d}$ whose first order increments are stationary. Using the self-similarity of $\{X(t)\}_{t \in \mathbb{R}^d}$, it follows (see
 the proof of Theorem \ref{label-result in case of tightness} 
 below) that for all $\gamma \in (0, \alpha \wedge 1)$ and for all $n \geq 1$,
\begin{equation} \label{term_exp:to_be_estimated}
\begin{split}
&\E \left(\displaystyle{\max_{\tau_n \in D_n}\,\max_{\tau^\prime_{n-1} \in O_{n-1}(\tau_n)}}|X(\tau_n)-X(\tau^\prime_{n-1})|^\gamma\right) \\
& \leq 2^{-nH\gamma} \displaystyle{\sum_{v \in V} \E\left(\max_{t \in [\mathbf{1}, 2^n \mathbf{1}] \cap \mathbb{Z}^d}|Y^{(v)}(t)|^\gamma\right)},
\end{split}
\end{equation}
where $\mathbf{Y}^{(v)}=\{Y^{(v)}(t)\}_{t \in \mathbb{Z}^d}$ is the discrete-parameter increment field defined by
$$
Y^{(v)}(t)=X(t+v)-X(t), \;\;t \in \mathbb{Z}^d
$$
in the direction $v \in V:=\{-1, 0, 1\}^d \setminus \{\mathbf{0}\}$.

The crucial observation is that due to the stationarity of the increments, each discrete-parameter field $\mathbf{Y}^{(v)}$ is stationary.
Therefore, in order to estimate the quantity in \eqref{term_exp:to_be_estimated}, it suffices to establish sharp upper bounds on
\begin{equation}\label{seq_maximal_moment}
\E\bigg(\max_{t \in [\mathbf{0}, (2^n-1) \mathbf{1}] \cap \mathbb{Z}^d}|Y(t)|^\gamma\bigg),
\end{equation}
where $\mathbf{Y}=\{Y(t)\}_{t \in \mathbb{Z}^d}$ is a stationary $\sas$ random field, $n \geq 1$ and $\gamma \in
(0, \alpha \wedge 1)$. This translates an investigation of sample path regularity properties into an extreme
value theoretic question.  Along this direction, some partial results were obtained in \cite{xiao:2010} 
which are applicable to stable random fields with certain specific dependence structures.
\subsection{Related works on partial maxima of stable processes}
In this work, we have improved upon the results in \cite{xiao:2010} and computed the exact rate of growth of the maximal moment sequence
\eqref{seq_maximal_moment} for a large class of stationary $\sas$ random fields, thus solving the problem of characterizing path properties of such random fields as posed in
\cite{xiao:2010} (see pages 173-174 therein). The main tools used in our solution  are ergodic-theoretic and algebraic in
nature as described below. We provide an overview of these techniques and related work below.

It was established by \cite{rosinski:1995, rosinski:2000} that every stationary $\SaS$ random field $\mathbf{Y}=\{Y(t)\}_{t \in \Zd}$
has an integral representation of the form
\begin{eqnarray}
Y(t) &\eqdef&\int_S c_t(s){\left(\frac{d \mu \circ \phi_t}{d
\mu}(s)\right)}^{1/\alpha}f \circ \phi_t(s) M(ds),\;\; \ \ t \in
\mathbb{Z}^d\,, \label{repn_integral_stationary}
\end{eqnarray}
where $M$ is a $\sas$ random measure on some standard Borel space $(S,\mathcal{S})$ with $\sigma$-finite
control measure $\mu$, $f \in L^{\alpha}(S,\mu)$, $\{\phi_t\}_{t \in \mathbb{Z}^d}$
is a nonsingular $\mathbb{Z}^d$-action on $(S, \mathcal{S},\mu)$ (i.e., each $\phi_t: S \to S$ is a measurable map,
$\phi_0$ is the identity map on $S$, $\phi_{u+v} = \phi_u \circ \phi_v$ for all $u, v \in \Zd$ and each $\mu \circ \phi_t$
 is equivalent to $\mu$), and $\{c_t\}_{t
\in \mathbb{Z}^d}$ is a measurable cocycle for $\{\phi_t\}$ (i.e., each $c_t$ is a $\{\pm 1\}$-valued measurable map
defined on $S$ satisfying $c_{u+v}(s)= c_u(\phi_v(s)) c_v(s)$ for all $u, v \in \Zd$ and for all $s \in S$). See, for
example, \cite{aaronson:1997}, \cite{krengel:1985}, \cite{varadarajan:1970} and \cite{zimmer:1984} for discussions
on nonsingular (also known as \emph{quasi-invariant}) group actions.

The Rosi\'nski Representation \eqref{repn_integral_stationary} is very useful in determining various probabilistic properties of
$\mathbf{Y}$; see, for example, \cite{mikosch:samorodnitsky:2000a}, \cite{samorodnitsky:2004a, samorodnitsky:2004b},
\cite{resnick:samorodnitsky:2004}, \cite{samorodnitsky:2005a}, \cite{roy:samorodnitsky:2008}, \cite{roy:2010a, roy:2010b},
 \cite{wang:2013}, \cite{chakrabarty:roy:2013},  \cite{fasen:roy:2016}. In this work, we shall focus on estimating the
maximal moment in (\ref{seq_maximal_moment}) and its connection to uniform modulus of continuity of  $\sas$ random fields.
We say that a stationary $\sas$ random field $\{Y(t)\}_{t\in \mathbb{Z}^d}$ is generated by a nonsingular
$\mathbb{Z}^d$-action $\{\phi_t\}$ on $(S, \mu)$ if it has an
integral representation of the form $(\ref{repn_integral_stationary})$ satisfying
the full support condition
$
\bigcup_{t \in \mathbb{Z}^d} \,\text{Support}( f \circ \phi_t)=S,
$
which will be assumed without loss of generality.

A measurable set $W \subseteq S$ is called a \emph{wandering set} for the nonsingular $\mathbb{Z}^d$-action
$\{\phi_t\}_{t \in \mathbb{Z}^d}$ if $\{\phi_t(W):\;t\in \mathbb{Z}^d\}$ is a pairwise
disjoint collection. The set $S$ can be decomposed into two
disjoint and invariant parts as follows: $S=\mathcal{C} \cup \mathcal{D}$, where $\mathcal{D} = \bigcup_{t \in \mathbb{Z}^d} \phi_t(W^\ast)$
for some wandering set $W^\ast \subseteq S$, and $\mathcal{C}$ has no wandering subset of positive $\mu$-measure; see \cite{aaronson:1997}
and \cite{krengel:1985}. This decomposition is called the {\em Hopf decomposition}, and the sets $\mathcal{C}$ and $\mathcal{D}$ are
called {\emph conservative} and {\emph dissipative} parts (of $\{\phi_t\}_{t \in G}$), respectively. The action is called conservative if
$S=\mathcal{C}$ and dissipative if $S=\mathcal{D}$.

Denote by $f_t(s)$ the family of functions on $S$ in the representation (\ref{repn_integral_stationary}):
\[
f_t(s) = c_t(s){\left(\frac{d \mu \circ \phi_t}{d
\mu}(s)\right)}^{1/\alpha}f \circ \phi_t(s),\;\; \ \ t \in
\mathbb{Z}^d.
\]
The Hopf decomposition of $\{\phi_t\}_{t \in \Zd}$ induces the following unique (in law) decomposition of the random field $\bY$
\begin{equation}
Y(t) \eqdef  \int_{\mathcal{C}} f_t(s)M(ds)+\int_{\mathcal{D}} f_t(s)M(ds):=Y^{\mathcal{C}}(t)+Y^{\mathcal{D}}(t),\;\; t \in \mathbb{Z}^d,
\label{decomp_of_X_t}
\end{equation}
where the two random fields $\bY^\mathcal{C}$ and $\bY^\mathcal{D}$ are independent and are generated by conservative
and dissipative $\Zd$-actions, respectively; see \cite{rosinski:1995, rosinski:2000}, and \cite{roy:samorodnitsky:2008}.
This decomposition reduces the study of stationary $\sas$ random fields to that of the ones generated by conservative
and dissipative actions.

It was argued by \cite{samorodnitsky:2004a} (see also \cite{roy:samorodnitsky:2008}) that stationary $\sas$
random fields generated by conservative actions have longer memory than those  generated by dissipative actions
and therefore, the following dichotomy were observed:
\begin{equation*}
n^{-d/\alpha} \max_{\|t\|_\infty \leq n}|Y(t)| \Rightarrow \left\{
                                     \begin{array}{ll}
                                     c_\bY Z_\alpha, & \mbox{ if $\bY$ is generated by a dissipative action,} \\
                                     0,              & \mbox{ if $\bY$ is generated by a conservative action}
                                     \end{array}
                              \right. 
\end{equation*}
as $n \rightarrow \infty$. In the limit above, $Z_\alpha$ is a standard Frech\'{e}t type extreme value random variable with distribution
\begin{equation}
\P(Z_\alpha \leq x)=e^{-x^{-\alpha}},\;\,x > 0,  \label{cdf_of_Z_alpha}
\end{equation}
and $c_\bY$ is a positive constant depending on the random field $\bY$.
In fact, this limiting behavior of the maximal process is closely tied with the
limit of the deterministic sequence
\begin{equation}\label{Def:bn}
\{b_n\}_{n\geq 1}
  =\left\{\left(\int_{S}\max_{{\bf 0}\leq t\leq (n-1)\bf{1}}|f_t(s)|^{\alpha}\mu(ds)\right)^{1/\alpha}\right\}_{n\geq 1},
\end{equation}
which has been proved by \cite{samorodnitsky:2004a}, \cite{roy:samorodnitsky:2008} to satisfy
\begin{equation}
\label{label-quantity that controls maximal functional}
n^{-d/\alpha} b_n \to \left\{
                                     \begin{array}{ll}
                                     \tilde{c}_\bY & \mbox{ if action is dissipative,} \\
                                     0              & \mbox{ if action is conservative,}
                                     \end{array}
                              \right. 
\end{equation}
where $\tilde{c}_\bY$ is a positive constant.\\
For conservative actions, the actual rate of growth of the partial maxima sequence $M_n$ depends on further properties
of the action as investigated in \cite{roy:samorodnitsky:2008}.

The work mentioned above hinges on some group theoretic preliminaries, as discussed briefly below. Let
$$A=\big\{\phi_{t}:\;t\in\mathbb{Z}^{d}\big\}$$
be a subgroup of the group of invertible nonsingular transformations on $(S,\mu)$ and define a group homomorphism,
$\Phi: \mathbb{Z}^{d}\to A $ by $\Phi(t)=\phi_{t}$ for all $t\in\mathbb{Z}^{d}$. Let
$$\mathcal{K}={\rm Ker}(\Phi)=\big\{t\in\mathbb{Z}^{d}:\phi_{t}=1_{S}\big\},$$
where $1_{S}$ denotes the identity map on $S$. Then $\mathcal{K}$ is a free abelian group and by the first isomorphism theorem of groups, we have
$$A\cong \mathbb{Z}^{d}/\mathcal{K}.$$
Now, by the structure theorem of finitely generated abelian groups (see, for example, Theorem~8.5 in Chapter~I of \cite{lang:2002}), we get,
$$A=\bar{F} \oplus \bar{N},$$
where $\bar{F}$ is a free abelian group and $\bar{N}$ is a finite group. Assume $rank(\bar{F})=p\geq 1$ and $|\bar{N}|=l$. Since,
$\bar{F}$ is free abelian, there exists an injective group homomorphism,
$$\Psi:\bar{F}\to \mathbb{Z}^{d},$$
such that $ \Phi\circ\Psi=1_{\bar{F}}.$ Then $F=\Psi(\bar{F})$ is a free subgroup of $\mathbb{Z}^{d}$ of rank $p$.
The subgroup $F$ can be regarded as an effective index set and its rank $p$ is an upper bound on effective dimension of the random field giving more precise information on the rate of growth of the partial maximum than the actual dimension $d$. Depending on the nature of the action restricted to $F$, the deterministic sequence $b_n$ controlling the rate of partial maxima shows
the following asymptotic behavior:
\begin{equation*}
\label{label-quantity that controls maximal functional:weak}
n^{-p/\alpha} b_n \to \left\{
                                     \begin{array}{ll}
                                     c & \mbox{ if action restricted to $F$ is not conservative,} \\
                                     0              & \mbox{ if action restricted to $F$ is conservative},
                                     \end{array}
                              \right. 
\end{equation*}
where $c$ is a positive and finite constant 
and $p$ is the effective dimension of the field as long as the restricted action is not conservative. Otherwise, $p$ should be regarded as an upper bound on the effective dimension.

More specifically, Theorem 5.4 in \cite{roy:samorodnitsky:2008} (which is summarized below) sharpens the description
of the asymptotic behavior of the partial maxima of
a random field when the action is conservative by observing the behavior of the action when restricted to the free subgroup $F$ of
$\mathbb{Z}^{d}$, leading to the conclusion that $\max_{\|t\|_\infty \leq n}|Y(t)|= O(n^{p/\alpha} )$ when the effective $F$-action is not conservative, and is $o(n^{p/\alpha})$ in the conservative case.
That is,
\begin{equation*}
\label{label-in prob. convergence gtp}
n^{-p/\alpha} \max_{\|t\|_\infty \leq n}|Y(t)| \Rightarrow \left\{
                                     \begin{array}{ll}
                                     c_\bY Z_\alpha & \mbox{ if $\{\phi_t\}_{t \in F}$ is not conservative,} \\
                                     0              & \mbox{ if $\{\phi_t\}_{t \in F}$ is conservative.}
                                     \end{array}
                              \right. 
\end{equation*}
Similar rates of growth are computed for continuous-parameter random fields in \cite{chakrabarty:roy:2013}
improving upon the works of \cite{samorodnitsky:2004b} and \cite{roy:2010b}.

\subsection{Our contributions}\label{sec:contri}
This work provides the rates of growth of the $\beta$-th moment of the partial maxima sequence denoted as
$$M_n = \max_{\B{0}\leq t\leq (n-1)\B{1}}|Y(t)|$$ whenever $0<\beta<\alpha$ for a stationary $\sas$ process
$\bY=\{Y(t)\}_{t\in \mathbb{Z}^d}$ with an integral representation given by \eqref{repn_integral_stationary} solving an open problem mentioned in \cite{xiao:2010}.
Theorem \ref{label-Discrete MoM} in Section \ref{sec:discrete} shows that the $\beta$-th moment of maxima of such
discrete random fields are $O(n^{d\beta/\alpha})$ for a nonconservative action and $o(n^{d\beta/\alpha})$ for a
conservative one. We sharpen the above asymptotics in the case of a conservative action by looking at properties
of the underlying action restricted to the free subgroup $F$ with effective dimension $p$; seeTheorem
\ref{label-Discrete Moments of maxima gtp}. 

We also introduce the concept of \emph{weak effective dimension}
generalizing the notion of effective dimension of \cite{roy:samorodnitsky:2008} and relate it to maximal moments
(see Theorem~\ref{weak:eff:dim} below) of stable random fields. We also provide easy extensions of our results
to the continuous parameter case in the appendix. The main idea of the proofs of these theorems is to exploit a series representation given in \cite{samorodnitsky:2004a} and follow the proof of its key result (see Theorem~4.1 therein) to get sharp tail bounds for the lower powers of maxima of stationary $\sas$ random fields so that dominated convergence theorem can be used.


Finally, we use the rates of growth of the partial maxima sequence for stationary random fields $\bY^{(v)}$ to derive path properties
of a real valued $H$-self-similar $\sas$ random field $\bX$ with stationary increments. Our main result is Theorem
\ref{label-result in case of tightness} which establishes uniform modulus of continuity for a large class of such random fields.
 As a consequence (see Corollary \ref{label-result in case of effective dimension}), we prove that the paths of $\bX$ are
 uniformly H\"older continuous of all orders $<H-\frac{p}{\alpha}$ when the corresponding increment processes $\bY^{(v)}$
 are generated by actions with effective dimension $p$. Corollary~\ref{label-result in case of weak effective dimension} connects path properties with weak effective dimension in a natural fashion. The short memory case (i.e., when the effective dimension $p=d$), on the other hand, is considered
 in Corollary \ref{label-result in case of dissipative flow}. 
These results show that in presence of stronger dependence $p < d$, the sample paths of $\bX$ become smoother because
stronger dependence prevent erratic jumps. Therefore, H\"older continuity of $\sas$ random fields also changes at the boundary
between short and long memory, which validates the conjecture in \cite[p.1440]{samorodnitsky:2004a}.


\section{Maximal Moments of Stationary $\sas$ Random Fields}\label{sec:discrete}

In this section, we solve an open problem mentioned in \cite[Page~173]{xiao:2010} and give sharp upper and lower bounds on maximal moments of stationary $\sas$ random fields when the maximum is taken over hypercubes of increasing size. Our results significantly improve the existing bounds given in Lemma~3.5 of \cite{xiao:2010} and hence the ones in \cite{talagrand2006generic}. This is achieved through exploitation of underlying nonsingular actions, and their ergodic theory and group theory. We also introduce the notion of weak effective dimension of stationary $\sas$ random fields in this section and apply it to estimate maximal moments.

The following is our main result on the asymptotic behavior of the maximal moments of stationary
$\sas$  random fields indexed by $\mathbb{Z}^d$. This result, together with the next two, solves the aforementioned open problem. The proofs are deferred to Section~\ref{sec:proofs} in order to increase the readability of our paper.

\begin{thm} \label{thm1:maximal_moment}
Let $\bY=\{Y(t)\}_{t\in \Zd}$ be a stationary $\sas$ random field with $0<\alpha< 2$ and having integral
representation as
\begin{equation}
\begin{split}
Y(t) &\eqdef \int_S f_t(s)M(ds)\\
&=\int_S c_t(s){\left(\frac{d \mu \circ \phi_t}{d
\mu}(s)\right)}^{1/\alpha}f \circ \phi_t(s) M(ds),\;\; t \in
\mathbb{Z}^d\,, \label{repn_integral_stationary2}
\end{split}
\end{equation}
where $M$ is a  $\SaS$ random measure on $(S, {\mathcal S})$ with a control measure $\mu$ as in
 (\ref{repn_integral_stationary}).
\begin{enumerate}
\item  If $\bY$ is generated by a dissipative action or equivalently\footnote{See Theorem 3.3 of
\cite{roy:samorodnitsky:2008}}, $\bY$ has a mixed moving
average representation given by
\begin{equation*}
\label{MMR}
\bY\stackrel{d}{=}\left\{\int_{W\times\mathbb{Z}^d}f(v,t+s)M(dv,ds)\right\}_{t\in\mathbb{Z}^{d}},
\end{equation*}
then, for $0<\beta<\alpha$,
\begin{equation}
\label{label-not conservative moments}
n^{{-d\beta }/{\alpha}} \E\big[M_{n}^{\beta}\big] \to C \ \ \text{ as } n \to \infty,
\end{equation}
where  $C=\tilde{c}_{\bY}^{\beta}C_{\alpha}^{\beta/\alpha} \E\left[Z_{\alpha/\beta}\right]$,
with $Z_{\alpha/\beta}$ denoting a Frech\'et random variable defined in
\eqref{cdf_of_Z_alpha} with shape parameter $\alpha/\beta$,
$\tilde{c}_{\bY}$ is the constant in  (\ref{label-quantity that controls maximal functional})
and
\begin{equation}
\label{C-alpha}
 C_\alpha=\begin{cases}
      \cfrac{1-\alpha}{\Gamma(2-\alpha)\cos(\pi\alpha/2)}, \ & \text{ if } \alpha\neq 1, \\
      \;\;\;\;\;\;\;\;\cfrac{2}{\pi}, & \text{ if } \alpha= 1.
        \end{cases}
\end{equation}
\item If $\bY$ is generated by a conservative action, then for $0<\beta<\alpha$,
\begin{equation}
\label{label-conservative moments}
n^{{-d\beta }/{\alpha}} \E\big[M_{n}^{\beta}\big] \to 0 \ \ \text{ as } n \to \infty. \\
\end{equation}
\end{enumerate}
\label{label-Discrete MoM}
\end{thm}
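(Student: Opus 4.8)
The plan is to deduce convergence of the $\beta$-th moment from a weak-limit statement for the normalized maximum together with a uniform-in-$n$ tail bound. Write $M_n=\max_{\mathbf{0}\le t\le(n-1)\mathbf{1}}|Y(t)|$ and $\widetilde M_n:=n^{-d/\alpha}M_n$. If we establish that (a) $\widetilde M_n\Rightarrow c_{\bY}Z_\alpha$ in the dissipative case and $\widetilde M_n\Rightarrow 0$ in the conservative case, with $c_{\bY}=\tilde{c}_{\bY}C_\alpha^{1/\alpha}$, and (b) the family $\{\widetilde M_n^{\,\beta}\}_{n\ge1}$ is uniformly integrable, then the continuous-mapping theorem applied to the continuous map $x\mapsto x^{\beta}$ on $[0,\infty)$, combined with uniform integrability, yields $\E\big[\widetilde M_n^{\,\beta}\big]\to\E\big[(c_{\bY}Z_\alpha)^{\beta}\big]$ in the dissipative case and $\to0$ in the conservative case; identifying the limiting constant then finishes the proof.

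For (a) we use the LePage-type series representation that underlies Theorem~4.1 of \cite{samorodnitsky:2004a}. With $m_n(s)=\max_{\mathbf{0}\le t\le(n-1)\mathbf{1}}|f_t(s)|$ and $b_n$ as in \eqref{Def:bn}, let $\{\Gamma_j\}$ be the points of a unit-rate Poisson process on $(0,\infty)$, $\{\epsilon_j\}$ be i.i.d.\ Rademacher variables, and $\{U_j^{(n)}\}$ be i.i.d.\ with density $m_n^{\alpha}/b_n^{\alpha}$ with respect to $\mu$, all mutually independent. Then $\big\{Y(t):\mathbf{0}\le t\le(n-1)\mathbf{1}\big\}\ \eqdistn\ \big\{C_\alpha^{1/\alpha}\,b_n\sum_{j\ge1}\epsilon_j\,\Gamma_j^{-1/\alpha}\,f_t(U_j^{(n)})/m_n(U_j^{(n)}):\mathbf{0}\le t\le(n-1)\mathbf{1}\big\}$. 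Because $\max_t|f_t(s)|/m_n(s)=1$ for every $s$, the $j=1$ term alone contributes $C_\alpha^{1/\alpha}b_n\Gamma_1^{-1/\alpha}$ to $\max_t|Y(t)|$, and $\Gamma_1^{-1/\alpha}\eqdistn Z_\alpha$ since $\Gamma_1$ is standard exponential. Following the argument in the proof of Theorem~4.1 of \cite{samorodnitsky:2004a} one shows that the contribution of $\sum_{j\ge2}$ to $b_n^{-1}\max_t|Y(t)|$ is asymptotically negligible, so that $b_n^{-1}M_n\Rightarrow C_\alpha^{1/\alpha}Z_\alpha$. Writing $\widetilde M_n=(n^{-d/\alpha}b_n)\,(b_n^{-1}M_n)$ and using \eqref{label-quantity that controls maximal functional} ($n^{-d/\alpha}b_n\to\tilde{c}_{\bY}$ in the dissipative case, $\to0$ in the conservative case) together with Slutsky's theorem gives (a), with $c_{\bY}=\tilde{c}_{\bY}C_\alpha^{1/\alpha}$.

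For (b), the quickest route exploits stationarity: the change of variables $u=\phi_t(s)$ gives $\|f_t\|_\alpha=\|f\|_\alpha$ for every $t$, so all the $Y(t)$ have the same $\sas$ law, and by the stable tail asymptotics there is a finite $K$ with $\P(|Y(0)|>u)\le Ku^{-\alpha}$ for $u\ge1$. A union bound then gives $\P(\widetilde M_n>x)\le n^{d}\,\P(|Y(0)|>x\,n^{d/\alpha})\le Kx^{-\alpha}$ for every $n\ge1$ and $x\ge1$, which, together with the trivial bound $\P(\widetilde M_n>x)\le1$, shows that $\sup_{n\ge1}\E\big[\widetilde M_n^{\,\beta'}\big]<\infty$ for each $\beta'\in(\beta,\alpha)$ (the condition $\beta'<\alpha$ being exactly what makes $\int_1^\infty x^{\beta'-1-\alpha}\,dx$ finite), and hence $\{\widetilde M_n^{\,\beta}\}_{n\ge1}$ is uniformly integrable and $\E[Z_{\alpha/\beta}]<\infty$. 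Alternatively, and in a way that unifies with the finer results of this section (where a union bound over $n^{d}$ points loses the correct polynomial rate), one reads the tail bound directly off the series representation above by isolating the $j=1$ term and estimating the tail of the remainder uniformly in $n$. It remains to identify the constant: from $\P(Z_\alpha^{\beta}\le x)=\P(Z_\alpha\le x^{1/\beta})=e^{-x^{-\alpha/\beta}}$ we get $Z_\alpha^{\beta}\eqdistn Z_{\alpha/\beta}$, the Fréchet law of \eqref{cdf_of_Z_alpha} with shape parameter $\alpha/\beta$, so $\E\big[(c_{\bY}Z_\alpha)^{\beta}\big]=c_{\bY}^{\beta}\,\E[Z_{\alpha/\beta}]=\tilde{c}_{\bY}^{\beta}C_\alpha^{\beta/\alpha}\,\E[Z_{\alpha/\beta}]=C$, as claimed, while in the conservative case the limit is simply $0$.

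The main obstacle is the uniform-in-$n$ control of the remainder series $\sum_{j\ge2}\epsilon_j\Gamma_j^{-1/\alpha}f_t(U_j^{(n)})/m_n(U_j^{(n)})$ arising in step (a): one needs not merely that it is asymptotically negligible (which is classical) but that the law of its supremum over the hypercube is tight with a tail light enough, uniformly over $n$, to support a dominated-convergence argument; this is precisely why one must follow the proof of Theorem~4.1 of \cite{samorodnitsky:2004a} rather than simply quote its conclusion.
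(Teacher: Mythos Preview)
Your proof is correct and takes a genuinely different route from the paper's.

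The paper works directly at the level of moments: it integrates the upper and lower tail bounds from (4.21) of \cite{samorodnitsky:2004a} over $(0,\infty)$, derives integrable-in-$\tau$ dominating functions for the error terms $\phi_n(\epsilon,\tau^{1/\beta})$ and $\psi_n(\epsilon,\delta,\tau^{1/\beta})$ (via Poisson-count estimates and a $p$-th moment Markov inequality with $p\in(\alpha,\alpha(K+1))$), and applies DCT separately to each piece, then lets $\delta\downarrow0$. For the conservative case it constructs an auxiliary process $\bZ=\bY+\bW$ so that condition \textbf{(LB)} holds for $\bZ$, and repeats the DCT argument on $\bZ$ together with the symmetry inequality $\P(M_n^Z>x)\ge\tfrac12\P(M_n>x)$. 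Your approach instead separates the problem into the already-known weak limits of $\widetilde M_n$ and a single uniform-integrability estimate via the union bound $\P(\widetilde M_n>x)\le n^d\,\P(|Y(0)|>xn^{d/\alpha})\le Kx^{-\alpha}$, which sidesteps the error terms entirely. This is cleaner and shorter.

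What the paper's approach buys is extensibility: your union bound burns the full factor $n^d$, so it cannot deliver the sharper rates $n^{\theta_2\beta/\alpha}$ with $\theta_2<d$ that Theorems~\ref{weak:eff:dim} and~\ref{label-Discrete Moments of maxima gtp} require (as you yourself note). The paper's more laborious estimates on $\phi_n$ and $\psi_n$ are precisely what carry over to those refinements and hence to the improved modulus-of-continuity results when the effective dimension is strictly below $d$. One small point worth making explicit: in the conservative case your step~(a) should invoke $\widetilde M_n\to0$ in probability directly from the known dichotomy, not via $b_n^{-1}M_n\Rightarrow C_\alpha^{1/\alpha}Z_\alpha$, since conditions \textbf{(LB)}/\textbf{(LL)} need not hold for $\bY$ itself.
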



The above result solves an open problem mentioned (right after the proof of Lemma~3.5) in \cite{xiao:2010} when the underlying group action is dissipative. Note that as long as the action is not conservative, the same asymptotics will hold for the maximal moment sequence. In the next result, we present a solution to the problem in a more general situation. Before we describe the next theorem, we introduce the notion of weak effective dimension of a stationary $\sas$ random field. This notion should be considered significantly better than the effective dimension (defined by \cite{roy:samorodnitsky:2008}), which is always an integer whereas weak effective dimension need not be an integer.
\begin{defn}
\label{weak:eff:dim:defn}
We say that a stationary $\sas$ random field has weak effective dimension bounded by $\theta_2 \in (0, d]$ if there exist constants $c_1 >0,\, c_2>0$ and $\theta_1  \in (0,\theta_2]$ such that the sequence $b_n$ defined by \eqref{Def:bn} satisfies
\begin{equation}
c_1 n^{\theta_1} \leq b_n^\alpha \leq c_2 n^{\theta_2}  \label{weak:eff}
\end{equation}
for all sufficiently large $n$. If \eqref{weak:eff} is satisfied with $\theta_2=\theta_1$, then we call $\theta_2$ the weak effective dimension of the random field. If further weak effective dimension $\theta_2 = d$, then we say that the stationary $\sas$ random field is full-dimensional.
\end{defn}

Clearly, Proposition~4.1 in \cite{roy:samorodnitsky:2008} ensures that any stationary $\sas$ random field with a nontrivial dissipative (equivalently, mixed moving average) part is full-dimensional. The rationale behind this nomenclature (and also behind restricting the value of $\theta_2$ in the interval $(0, d]$) can be explained by the following calculation:
\begin{align}
b_n & =\left( \int_S \max_{{\bf 0}\leq t\leq (n-1)\bf{1}} |f_t(x)|^\alpha \mu(dx) \right)^{1/\alpha} \nonumber \\
       & = \left( \int_S \max_{{\bf 0}\leq t\leq (n-1)\bf{1}} \left[|f \circ \phi_t(x)|^\alpha \frac{d\mu \circ \phi_t}{d \mu}(x) \right]\mu(dx) \right)^{1/\alpha}. \nonumber \\
\intertext{Bounding the maximum by the sum and using Fubini's Theorem, we get}
b_n^\alpha & \leq  \left( \sum_{{\bf 0}\leq t\leq (n-1)\bf{1}}\int_S  \left[|f \circ \phi_t(x)|^\alpha \frac{d\mu \circ \phi_t}{d \mu}(x) \right]\mu(dx) \right)\nonumber \\
      &=\left( \sum_{{\bf 0}\leq t\leq (n-1)\bf{1}}\int_S |f \circ \phi_t(x)|^\alpha d\mu \circ \phi_t(x) \right)\nonumber \\
       & = \left( \sum_{{\bf 0}\leq t\leq (n-1)\bf{1}}\int_S  |f (x)|^\alpha d\mu(x)\right) = n^{d} \|f\|_\alpha^\alpha. \nonumber
\end{align}

If a stable random field has effective dimension (as described in Section~\ref{sec:prelim}) $p$, then thanks to Proposition~5.1 in \cite{roy:samorodnitsky:2008}, we can take $\theta_1 = \theta_2 = p$ in Definition~\ref{weak:eff:dim:defn} making this notion coincide with its weaker version introduced in Definition~\ref{weak:eff:dim:defn}. The connection of weak effective dimension to asymptotics of maximal moments is given in the following result, which also contributes to the solution of open problem mentioned in Page 173 of \cite{xiao:2010}.

\begin{thm}
\label{weak:eff:dim}
Consider a stationary $\sas$ random field with $0<\alpha< 2$, $\bY=\{Y(t)\}_{t\in \Zd}$ with integral representation as
\eqref{repn_integral_stationary2}. If the field has weak effective dimension bounded by $\theta_2$,
then for all $ n \ge 1$,
\begin{equation}
\label{weak:eff:bound}
n^{-\theta_2\beta / \alpha} \E\big[ M_n^{\beta}\big] \leq K^\prime,
\end{equation}
where $K^\prime$ is a finite constant.
\end{thm}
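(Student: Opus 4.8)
The plan is to first establish the scale-free estimate
\[
\E\big[M_n^{\beta}\big]\;\le\;K''\,b_n^{\beta}\qquad\text{for all }n\ge 1,
\]
with $K''$ depending only on $\alpha$ and $\beta$, and then to feed in the \emph{upper} half of \eqref{weak:eff}: from $b_n^{\alpha}\le c_2 n^{\theta_2}$ (valid for large $n$) we get $b_n^{\beta}\le c_2^{\beta/\alpha}n^{\theta_2\beta/\alpha}$, whence $n^{-\theta_2\beta/\alpha}\E[M_n^{\beta}]\le K''c_2^{\beta/\alpha}$; the finitely many small $n$ for which \eqref{weak:eff} may fail are harmless since each $\E[M_n^{\beta}]$ is finite (all marginals are $\sas$ and $\beta<\alpha$) and $n^{-\theta_2\beta/\alpha}$ is bounded there, so everything is absorbed into one constant $K'$. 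Note that $b_n<\infty$ by the computation $b_n^{\alpha}\le n^{d}\|f\|_{\alpha}^{\alpha}$ displayed right after Definition~\ref{weak:eff:dim:defn}, so normalising by $b_n$ is legitimate; the lower bound in \eqref{weak:eff} is not needed. Thus everything reduces to the displayed scale-free bound, which is in fact the same engine that drives the proof of Theorem~\ref{thm1:maximal_moment}.

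For the scale-free bound I would pass to the LePage--Rosi\'nski series representation of the restriction of $\bY$ to the hypercube $[\mathbf{0},(n-1)\mathbf{1}]\cap\Zd$. Writing $g_n(s)=\max_{\mathbf{0}\le t\le(n-1)\mathbf{1}}|f_t(s)|$ (so that $\|g_n\|_{\alpha}=b_n$), the measure $\nu_n(ds):=b_n^{-\alpha}g_n(s)^{\alpha}\mu(ds)$ is a probability measure and one has
\[
M_n\;\eqdistn\;b_n\,c_{\alpha}\,\max_{\mathbf{0}\le t\le(n-1)\mathbf{1}}\Big|\sum_{j\ge 1}\Gamma_j^{-1/\alpha}\,\varepsilon_j\,h_t^{(n)}(V_j)\Big|,
\]
where $h_t^{(n)}:=f_t/g_n$ (taken to be $0$ on $\{g_n=0\}$) satisfies $|h_t^{(n)}|\le 1$ everywhere and $\max_t|h_t^{(n)}(s)|=1$ for $\nu_n$-a.e.\ $s$, the $\Gamma_j$ are the arrival times of a unit-rate Poisson process, the $\varepsilon_j$ are i.i.d.\ Rademacher, the $V_j$ are i.i.d.\ with law $\nu_n$, all independent, and $c_{\alpha}=C_{\alpha}^{1/\alpha}$ with $C_{\alpha}$ as in \eqref{C-alpha}. (That $\|h_t^{(n)}\|_{L^{\alpha}(\nu_n)}^{\alpha}=b_n^{-\alpha}\|f\|_{\alpha}^{\alpha}\le 1$ for every $t$ follows from quasi-invariance of $\{\phi_t\}$.) Hence it suffices to bound $\E[N_n^{\beta}]$ uniformly in $n$, where $N_n:=\max_t\big|\sum_{j\ge 1}\Gamma_j^{-1/\alpha}\varepsilon_j h_t^{(n)}(V_j)\big|$.

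To do so I would follow the proof of Theorem~4.1 in \cite{samorodnitsky:2004a}. Peeling off the largest Poisson jump and using $\max_t|h_t^{(n)}(V_1)|=1$ a.s., one gets $N_n\le\Gamma_1^{-1/\alpha}+N_n'$ with $N_n':=\max_t\big|\sum_{j\ge 2}\Gamma_j^{-1/\alpha}\varepsilon_j h_t^{(n)}(V_j)\big|$; since $\beta<\alpha$, $\E[\Gamma_1^{-\beta/\alpha}]=\Gamma(1-\beta/\alpha)<\infty$, so it remains to bound $\E[(N_n')^{\beta}]$ uniformly. When $\alpha\in(0,1)$ this is immediate: $N_n'\le\sum_{j\ge 2}\Gamma_j^{-1/\alpha}<\infty$ a.s., with finite $\beta$-th moment not depending on $n$. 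When $\alpha\in[1,2)$ one must extract cancellation from the $\varepsilon_j$: conditioning on $\Gamma_1$, the remaining arrivals exceed $\Gamma_1$, the conditional coordinatewise variance is dominated by $\sum_{j\ge 2}\Gamma_j^{-2/\alpha}$, whose conditional mean is of order $\Gamma_1^{1-2/\alpha}$ (finite because $2/\alpha>1$), and a Bernstein/Hoeffding estimate, combined with the structural identity $\max_t|h_t^{(n)}|=1$ $\nu_n$-a.e., yields a tail bound $\P(N_n'>\lambda)\le\psi(\lambda)$ with $\int_0^{\infty}\lambda^{\beta-1}\psi(\lambda)\,d\lambda<\infty$, uniformly in $n$ --- exactly the mechanism of \cite{samorodnitsky:2004a}. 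Integrating by parts then gives $\E[M_n^{\beta}]=\beta b_n^{\beta}c_{\alpha}^{\beta}\int_0^{\infty}u^{\beta-1}\P(N_n>u)\,du\le K''b_n^{\beta}$.

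The hard part is the very last point in the range $\alpha\in[1,2)$: obtaining a bound on $\max_{\mathbf{0}\le t\le(n-1)\mathbf{1}}\big|\sum_{j\ge 2}\Gamma_j^{-1/\alpha}\varepsilon_j h_t^{(n)}(V_j)\big|$ that is genuinely uniform over the growing index set. A naive union bound over the $n^{d}$ indices $t$ costs a factor that grows in $n$ (at least logarithmically, even after optimising a jump threshold), which would ruin the estimate; avoiding this is possible only by exploiting that the normalised kernels $h_t^{(n)}$ attain their supremum $1$ at $\nu_n$-a.e.\ point --- i.e.\ the geometry of the underlying nonsingular action --- and this is the delicate core of Samorodnitsky's argument that one must transcribe to the present $\mathbb{Z}^d$-indexed hypercube setting. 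Everything else (the reduction of the first paragraph, the passage to the series representation, and the dominated-convergence bookkeeping turning tail bounds into moment bounds) is routine.
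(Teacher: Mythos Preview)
Your reduction to a scale-free inequality $\E[M_n^{\beta}]\le K''\,b_n^{\beta}$ with $K''$ depending only on $(\alpha,\beta)$ is the right idea for $\alpha\in(0,1)$ (this is exactly Remark~\ref{remark:LB_not_needed}), but for $\alpha\in[1,2)$ it is precisely the step that fails, and the paper's proof does \emph{not} proceed this way. The argument of \cite{samorodnitsky:2004a} that you invoke does \emph{not} avoid the union bound over $t$: it bounds $\psi_n$ by $n^{d}\,\P\big(|\sum_{j\ge K+1}\xi_j\Gamma_j^{-1/\alpha}|>\text{const}\cdot b_n\lambda\big)$ and then applies Markov with exponent $p\in(\alpha,\alpha(K+1))$, producing a factor $n^{d}b_n^{-p}$. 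Killing this factor requires $b_n^{p}\gtrsim n^{d}$, i.e.\ a polynomial \emph{lower} bound on $b_n$; the structural identity $\max_t|h_t^{(n)}|=1$ is used only to control the leading jump, not to circumvent the union bound. The paper's own proof makes this explicit: it chooses $p$ with $\alpha d/\theta_1<p<\alpha(K+1)$ and invokes the lower half of \eqref{weak:eff} to get $n^{d}b_n^{-p}\le A$. So your assertion that ``the lower bound in \eqref{weak:eff} is not needed'' is wrong for $\alpha\ge 1$.

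That the uniform bound $\E[M_n^{\beta}]\le K'' b_n^{\beta}$ can genuinely fail without a lower bound on $b_n$ is illustrated by the harmonizable example in Section~\ref{sec:examples}: there $b_n$ is constant in $n$, yet the paper obtains (and believes to be sharp) the upper bound $\E[M_n^{\gamma}]\le K'(\log n)^{\gamma/2}$, so $\E[M_n^{\beta}]/b_n^{\beta}$ is not expected to stay bounded. In short, the ``hard part'' you flag in your last paragraph is not a technicality to be transcribed from \cite{samorodnitsky:2004a}; it is the place where the hypothesis $c_1 n^{\theta_1}\le b_n^{\alpha}$ enters in an essential way, and your proposal as written has a gap there.
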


\begin{remark} \label{remark:LB_not_needed} \textnormal{By Theorem~2.1 of \cite{marcus:1984} (see also Equation (3.4) in \cite{samorodnitsky:2004a}), as long as $\alpha \in (0,1)$,}
\[
\E(M_n^\beta) \leq K_2 b_n^\beta
\]
\textnormal{always holds for some $K_2 \in (0, \infty)$, for all $\beta \in (0, \alpha)$ and for all $n \geq 1$. Therefore, the lower bound in \eqref{weak:eff} is not required when $0 < \alpha < 1$.}
\end{remark}


Now we consider the case when the underlying group action is conservative and establish refined results on maximal moments in  terms of the effective dimension $p$
of $\bY$. This is the place where algebra (more specifically, \emph{structure theorem for finitely generated abelian groups}) plays a significant role in the asymptotic properties of maximal moments and hence in solving the open problem in \cite{xiao:2010}.


\begin{thm} \label{thm3:maximal_moment}
\label{label-Discrete Moments of maxima gtp}
Let $\bY= \{Y(t)\}_{t\in \Zd}$  be  a stationary $\sas$ random field with $0<\alpha< 2$, with
integral representation written in terms of functions $\{f_{t}\}$ as in \eqref{repn_integral_stationary2}.
\begin{enumerate}
\item  If the underlying action $\{\phi_t\}_{t\in F}$ is dissipative when restricted to free subgroup $F$ with rank $p$, then
\begin{equation}
\label{label-G, not conservative moments}
n^{-p\beta/\alpha} \E\big[{M_{n}}^{\beta}\big] \to  C \ \ \text{ as } n \to \infty,
\end{equation}
where constant $C=c^{\beta}C_{\alpha}^{\beta/\alpha} \E\left[Z_{\alpha/\beta}\right]$, with $Z_{\alpha/\beta}$ denoting
a Frech\'et random variable 
defined in \eqref{cdf_of_Z_alpha} with shape parameter $\alpha/\beta$ and constant
$$c=\lim_{n\to\infty} n^{-p/\alpha}b_n, $$
and $C_\alpha$ as defined in \eqref{C-alpha}.
\item If the underlying action $\{\phi_t\}_{t\in F}$ is conservative when restricted to free subgroup $F$
with rank $p$, then
\begin{equation}
\label{label-G, conservative moments}
n^{{-p\beta }/{\alpha}} \E\big[M_{n}^{\beta}\big] \to 0 \ \ \text{ as } n \to \infty. \\
\end{equation}
\end{enumerate}
\end{thm}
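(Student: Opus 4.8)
The plan is to reduce Theorem~\ref{thm3:maximal_moment} to Theorem~\ref{thm1:maximal_moment} by ``restricting'' the random field to the free subgroup $F\leq\mathbb{Z}^d$ of rank $p$ produced by the structure theorem. The key observation is that if $\psi_1,\dots,\psi_p$ denote the images under $\Psi$ of a basis of $\bar F$, then the sub-action $\{\phi_t\}_{t\in F}$ is a nonsingular $\mathbb{Z}^p$-action on $(S,\mathcal{S},\mu)$, and the restricted random field $\{Y(\Psi(k))\}_{k\in\mathbb{Z}^p}$ is a stationary $\sas$ random field indexed by $\mathbb{Z}^p$ with the Rosi\'nski representation obtained by plugging $\phi_{\Psi(k)}$, $c_{\Psi(k)}$ into \eqref{repn_integral_stationary}. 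So Theorem~\ref{thm1:maximal_moment} applies verbatim to this $p$-dimensional field: if $\{\phi_t\}_{t\in F}$ is dissipative, then $m^{-p\beta/\alpha}\,\E\big[\max_{\mathbf 0\le k\le (m-1)\mathbf 1}|Y(\Psi(k))|^\beta\big]\to c^\beta C_\alpha^{\beta/\alpha}\E[Z_{\alpha/\beta}]$, where $c=\lim m^{-p/\alpha}b_m^{(F)}$ is the analogue of $\tilde c_{\bY}$ for the $p$-dimensional sub-field; and if it is conservative, the same quantity tends to $0$. (Here one must check that the $b$-sequence of the restricted field and the original $b_n$ are comparable up to constants, which is Proposition~5.1 of \cite{roy:samorodnitsky:2008}, already invoked in the paper; this pins down the constant $c=\lim_n n^{-p/\alpha}b_n$.)

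The second step is a sandwiching argument comparing the full cube maximum $M_n=\max_{\mathbf 0\le t\le(n-1)\mathbf 1}|Y(t)|$ with the maximum over the sublattice $F$. First I would note the trivial lower bound: the cube $[\mathbf 0,(n-1)\mathbf 1]$ contains a translate of $\Psi\big([\mathbf 0,(c_\ast n-1)\mathbf 1]\cap\mathbb{Z}^p\big)$ for a suitable constant $c_\ast>0$ (depending only on the fixed matrix $\Psi$ and on $d$), so by stationarity $\E[M_n^\beta]\ge \E\big[\max_{\mathbf 0\le k\le(\lfloor c_\ast n\rfloor-1)\mathbf 1}|Y(\Psi(k))|^\beta\big]$, giving the lower bound with the correct power $n^{p\beta/\alpha}$ and constant. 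For the upper bound one uses the finite group $\bar N$ of order $l$: every $t\in\mathbb{Z}^d$ can be written as $t=\Psi(\bar f)+r$ where $\bar f\in\bar F$ and $r$ ranges over a finite set of coset representatives of size $l$ (more precisely, using $\Phi(t)=\phi_t$, $A=\bar F\oplus\bar N$, and $\Phi\circ\Psi=1_{\bar F}$, decompose $\phi_t$ and absorb the $\bar N$-part and the kernel $\mathcal K$ into finitely many ``shifts''). Hence $M_n^\beta\le \sum_{r}\max_{k\in\Lambda_n}|Y(\Psi(k)+r)|^\beta$ over a bounded index set $\Lambda_n\subseteq\mathbb{Z}^p$ of side-length $O(n)$, and each summand is, by stationarity, equal in law to the restricted-field maximum over a cube of side $O(n)$. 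Taking expectations and applying Step~1 to each of the $O(1)$ terms yields $\E[M_n^\beta]\le K n^{p\beta/\alpha}$ in the conservative case with the right-hand side being $o(n^{p\beta/\alpha})$, and $\E[M_n^\beta]\le (\text{const})\,n^{p\beta/\alpha}(1+o(1))$ in the dissipative case.

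To upgrade the dissipative case from two-sided bounds to an exact limit with the stated constant, I would replace the crude ``max $\le$ sum'' bound by a more careful near-tiling of the cube $[\mathbf 0,(n-1)\mathbf 1]\cap\mathbb{Z}^d$ by translates of $\Psi\big([\mathbf 0,(m-1)\mathbf 1]\cap\mathbb{Z}^p\big)$: asymptotically the cube is covered by $\sim (n/m)^{?}$ such blocks but, crucially, because $F$ has rank $p<d$ the blocks that actually matter (those intersecting the support structure) are governed by the single parameter, and the dominant contribution comes from essentially one block up to an error that vanishes after normalization — this is exactly the mechanism behind \eqref{label-quantity that controls maximal functional:weak}. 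Concretely, I would follow the proof of Theorem~1.1 in the $d$-dimensional case (the series-representation / point-process argument of \cite{samorodnitsky:2004a} that the paper says it will imitate): show $n^{-p/\alpha}b_n\to c$, establish the tail estimate $\P(n^{-p/\alpha}M_n>x)\to 1-e^{-(c^{-\alpha}C_\alpha^{-1})x^{-\alpha}}$ via the series representation, verify the uniform integrability of $\{(n^{-p/\alpha}M_n)^\beta\}$ using the $\alpha$-stable tail bound (Theorem~2.1 of \cite{marcus:1984}) exactly as in Remark~\ref{remark:LB_not_needed}, and conclude by a dominated/uniform-integrability convergence argument that the $\beta$-th moments converge to $C_\alpha^{\beta/\alpha}c^\beta\E[Z_{\alpha/\beta}]$. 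The conservative part needs only the upper bound from Step~2 together with the conservative half of Theorem~\ref{thm1:maximal_moment} applied to the $p$-dimensional restricted field.

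The main obstacle I anticipate is the bookkeeping in Step~2, namely making the passage from the $\mathbb{Z}^d$-indexed cube maximum to the $F\cong\mathbb{Z}^p$-indexed maximum fully rigorous while tracking constants: one has to handle simultaneously the kernel $\mathcal K$ (along which $Y$ is constant in the sense that $\phi_t=1_S$), the finite torsion part $\bar N$ (which multiplies the count of relevant translates by at most $l$), and the fact that $\Psi$ is not norm-preserving (so cubes map to parallelepipeds), and then show all of these contribute only constant or $o(1)$ factors after the $n^{-p\beta/\alpha}$ normalization. The convergence of moments (uniform integrability) is technically the other delicate point, but it is handled uniformly in $n$ by the same $\alpha$-stable maximal-moment inequality already quoted in Remark~\ref{remark:LB_not_needed} combined with the lower bound $c_1 n^p\le b_n^\alpha$ coming from Proposition~5.1 of \cite{roy:samorodnitsky:2008}, so I expect it to go through without new ideas.
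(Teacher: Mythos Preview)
Your proposal contains two distinct strategies, and only the second one---your ``Step 3'' alternative of rerunning the series-representation/DCT argument of \cite{samorodnitsky:2004a} directly with the asymptotic $n^{-p/\alpha}b_n\to c$---is what the paper actually does. The paper's proof is very short: Proposition~5.1 of \cite{roy:samorodnitsky:2008} gives $b_n\sim c\,n^{p/\alpha}$ (so condition \textbf{(LB)} holds with $\theta=p/\alpha$) and the proof of Theorem~5.4 in the same reference supplies the analogue of condition \textbf{(LL)}; one then repeats verbatim the $\phi_n$/$\psi_n$ tail-splitting and DCT argument from the proof of Theorem~\ref{thm1:maximal_moment}, choosing $K$ so that $\alpha(K+1)>d\alpha/p$ and $p'$ with $d\alpha/p\le p'\le\alpha(K+1)$. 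The conservative half likewise just reruns the auxiliary-process-$\bZ$ argument with $n^{-p/\alpha}b_n^Z\to 0$.

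Your Steps 1--2 (restrict to $F$, apply Theorem~\ref{thm1:maximal_moment} to the $p$-dimensional sub-field, then sandwich) are a genuine detour and do not reach the exact constant without additional work---which you acknowledge. The specific difficulty you should be aware of is that your upper bound ``$M_n^\beta\le\sum_r\max_{k\in\Lambda_n}|Y(\Psi(k)+r)|^\beta$ over finitely many $r$'' is not correct as stated: the kernel $\mathcal{K}$ has rank up to $d-p$ and is infinite, so the number of coset representatives needed to cover $[\mathbf 0,(n-1)\mathbf 1]\cap\mathbb{Z}^d$ is of order $n^{d-p}$, not $l=|\bar N|$. One can salvage this by observing that along $\mathcal{K}$ the cocycle $c_t$ is a homomorphism into $\{\pm 1\}$ (since $\phi_t=1_S$), so only $2^{d-p}$ distinct random variables $Y(t)$ occur along kernel directions; but making this rigorous and then matching constants is more work than the direct route. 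In short: drop Steps~1--2 and go straight to your Step~3, which is both correct and exactly the paper's argument.
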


\begin{remark}
  \textnormal{The asymptotic properties of maximal moments can easily be extended to stationary measurable symmetric
  $\alpha$-stable random fields indexed by $\mathbb{R}^d$. This can be done based on the works of \cite{samorodnitsky:2004b},
  \cite{roy:2010b} and \cite{chakrabarty:roy:2013}. Since the results (and the proofs) are similar to those presented in this section,
  we have included them (only $d=1$ case for simplicity of presentation) in Theorem \ref{cont:parameter:gom} in Appendix \ref{Appendix} below.}
\end{remark}


\section{Uniform Modulus of Continuity}\label{sec:path:properties}
This section combines the maximal moment estimates in Section~\ref{sec:discrete} with Proposition \ref{propn:xiao_2010} to
establish uniform modulus of continuity of self-similar $\sas$ random fields with stationary increments. The basis behind the connection between maximal moments and path properties has already been explained in Section~\ref{sec:prelim} through a novel chaining argument of \cite{xiao:2010}.

As mentioned eaerlier, \cite{samorodnitsky:2004a} observed a phase transition boundary for the partial maxima sequence of stable processes that corresponds to an ergodic theoretic boundary (namely, the one obtained in Hopf decomposition) of the underlying nonsingular group action. He also conjectured that many other properties of stable processes will also undergo a phase transition at the same boundary. This conjecture has already been verified for  many probabilistic properties (e.g., ruin probabilities (see \cite{mikosch:samorodnitsky:2000a}), growth-rate of maxima sequence (see \cite{roy:samorodnitsky:2008}, \cite{roy:2010b}), point processes of extremes (see \cite{resnick:samorodnitsky:2004}, \cite{roy:2010a}), large deviations (see \cite{fasen:roy:2016}), statistical aspects (see \cite{bhattacharya:roy:2018}), etc.) of stable random fields but not for path behaviours.

The following theorem is the main result of this section and has three corollaries (see Corollaries \ref{label-result in case of dissipative flow}, \ref{label-result in case of effective dimension} and \ref{label-result in case of weak effective dimension} below) that confirm the aforementioned conjecture for uniform modulus of continuity of stable random fields.

\begin{thm}
\label{label-result in case of tightness}
Let $\bX=\big\{X(t)\big\}_{t \in \mathbb{R}^{d}}$ be a real-valued $H$-self-similar $\sas$ random field with
stationary increments and with the following integral representation
\begin{equation}
\label{label-integral representation: tightness}
X(t) \stackrel{d}{=}\int_{E}f_{t}(s)M(ds), \;\;\;t\in \mathbb{R}^{d},
\end{equation}
where $M$ is a $\sas$ random measure on a measurable space $(E,\mathcal{E})$ with a $\sigma$-finite
control measure $m$, while ${f}_{t}\in\; L^{\alpha}(m,\mathcal{E}) $ for all $t \in \mathbb{R}^{d}$.\\
Let $V=\left\{v=(v_{1},\cdots,v_{d}): v_{i}\in\left\{-1,0,1\right\}\right\} \backslash\{(0,\cdots, 0)\}$ be the set
of vertices of unit cubes in $[-1,1]^{d}$, excluding the origin ${\bf 0}$. Define for each $v\in V$, the random field
$\bY^{(v)} = \{Y^{(v)}(t), t\in \bbR^d\}$ by
 $Y^{(v)}\left(t\right)=X\left(t +v\right)-X\left(t \right),$
 with the integral representation given by $$Y^{(v)}(t)=\int_{E}f^{(v)}_{t}(x)M(dx),$$
where  $f^{(v)}_{t}=f_{v+t}-f_{t}$ for all $t \in \mathbb{R}^{d}$. 
If either
\begin{enumerate}
\item $0< \alpha < 1$ and there exist constants $0<\theta_2 < \alpha H$ and $K >0$ such that for all $v\in V$,
$$
 b^{(v)}_{n}:=\left(\int_{E}\max_{{\bf 0}\leq t\leq (n-1)\bf{1}}|f^{(v)}_{t}(x)|^{\alpha}m(dx)\right)^{1/\alpha} \leq K\, n^{\theta_2 / \alpha}
$$
for all sufficiently large $n$, or 
\item $1\leq \alpha < 2$ and there exists $\theta_{2} \in (0, \alpha H)$ such that for all $v\in V$ the increment field $\{Y^{(v)}\left(t\right)\}$ has weak effective dimension bounded by
$\theta_{2}$,
\end{enumerate}
then for any $0<\gamma< \alpha $, 
\begin{equation}
\label{label- path property 3}
\displaystyle\limsup_{h\to 0+}\dfrac{\sup_{t\in T}\sup_{|s-t|_\infty\leq h} |X(t)-X(s)|}{h^{(H-\theta_{2}/\alpha)}(\log{1/h})^{1/\gamma}}
=0\;\; \; \; {\rm a.s.},
\end{equation}
where $|s-t|_{\infty}=\max_{1\leq j\leq d}|s_{j}-t_{j}|$ is the $\ell^{\infty}$ metric on $\mathbb R^d$.
\end{thm}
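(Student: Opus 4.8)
The plan is to reduce the modulus-of-continuity statement to an application of Proposition~\ref{propn:xiao_2010} via the chaining inequality \eqref{term_exp:to_be_estimated}, and then control the right-hand side of \eqref{term_exp:to_be_estimated} using the maximal moment bounds of Section~\ref{sec:discrete}. First I would set $T=[0,1]^d$, $\rho(s,t)=|s-t|_\infty$, and $D_n=\{2^{-n}u:\,u\in[\mathbf{1},2^n\mathbf{1}]\cap\mathbb{Z}^d\}$, and verify that $\{D_n\}$ is a chaining sequence satisfying Conditions \eqref{property_finitely_many_nbrs} and \eqref{property_chaining}; here $\kappa_0$ depends only on $d$. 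Next, using the $H$-self-similarity of $\bX$ together with the stationarity of the increment fields $\bY^{(v)}$, I would justify the bound \eqref{term_exp:to_be_estimated}: for $\tau_n\in D_n$ and $\tau'_{n-1}\in O_{n-1}(\tau_n)$, the difference $X(\tau_n)-X(\tau'_{n-1})$ is, up to the self-similarity scaling factor $2^{-nH}$ (roughly), an increment of $\bX$ over a lattice-direction vector $v\in V$ evaluated at a lattice point inside $[\mathbf{1},2^n\mathbf{1}]$, so the maximum over the chain is dominated by $2^{-nH\gamma}\sum_{v\in V}\E\big(\max_{t\in[\mathbf{1},2^n\mathbf{1}]\cap\mathbb{Z}^d}|Y^{(v)}(t)|^\gamma\big)$. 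This requires being slightly careful about the exact geometry of $O_{n-1}(\tau_n)$ and ensuring every such difference is captured by one of the finitely many directions $v$, but it is essentially the computation referenced in the excerpt.

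With \eqref{term_exp:to_be_estimated} in hand, the remaining work is to bound $\E\big(\max_{t\in[\mathbf{1},2^n\mathbf{1}]\cap\mathbb{Z}^d}|Y^{(v)}(t)|^\gamma\big)$ by $K(2^{\theta_2 n/\alpha})^\gamma$ up to constants, for each fixed $v$; note $[\mathbf{1},2^n\mathbf{1}]\cap\mathbb{Z}^d$ is a hypercube of side $2^n$, so this is exactly the maximal moment sequence \eqref{seq_maximal_moment} with $N=2^n$. In case (2), $1\le\alpha<2$, the hypothesis is that each $\bY^{(v)}$ has weak effective dimension bounded by $\theta_2$, so Theorem~\ref{weak:eff:dim} directly gives $\E\big[M_N^\gamma\big]\le K' N^{\theta_2\gamma/\alpha}$ for all $N$ (taking $\beta=\gamma\in(0,\alpha)$), hence $\le K' 2^{n\theta_2\gamma/\alpha}$. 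In case (1), $0<\alpha<1$, the hypothesis only provides $b_n^{(v)}\le K n^{\theta_2/\alpha}$, i.e.\ the upper bound in \eqref{weak:eff}; but by Remark~\ref{remark:LB_not_needed} the lower bound in \eqref{weak:eff} is not needed when $\alpha\in(0,1)$ because $\E(M_N^\gamma)\le K_2 b_N^\gamma$ always holds (Theorem~2.1 of \cite{marcus:1984}). So in both cases we conclude
\[
\E \bigg(\max_{\tau_n \in D_n}\,\max_{\tau^\prime_{n-1} \in O_{n-1}(\tau_n)}|X(\tau_n)-X(\tau^\prime_{n-1})|^\gamma\bigg)
\le K\, 2^{-nH\gamma}\, 2^{n\theta_2\gamma/\alpha} = K\,\big(2^{-n(H-\theta_2/\alpha)}\big)^\gamma .
\]

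Finally I would invoke Proposition~\ref{propn:xiao_2010} with $\sigma(h)=h^{\,H-\theta_2/\alpha}$, which is nondecreasing (since $\theta_2<\alpha H$ forces $H-\theta_2/\alpha>0$), continuous, and regularly varying at $0$ with index $\delta=H-\theta_2/\alpha>0$. The hypothesis of the proposition, $\E(\cdots)\le K(\sigma(2^{-n}))^\gamma$, is precisely the displayed bound above. The conclusion gives, for every $\epsilon>0$,
\[
\lim_{h\to 0+}\frac{\sup_{t\in T}\sup_{\rho(s,t)\le h}|X(t)-X(s)|}{h^{\,H-\theta_2/\alpha}(\log 1/h)^{(1+\epsilon)/\gamma}}=0\quad\text{a.s.}
\]
Since this holds for arbitrarily small $\gamma\in(0,\alpha)$ and arbitrarily small $\epsilon>0$, a standard reparametrization ($\gamma'=\gamma/(1+\epsilon)$, still in $(0,\alpha)$) absorbs the $\epsilon$ and yields \eqref{label- path property 3} for every $0<\gamma<\alpha$, completing the proof. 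I expect the main obstacle to be the careful verification of the chaining reduction \eqref{term_exp:to_be_estimated} — in particular checking that the constant $\kappa_0$ and the directional decomposition are uniform in $n$ and that the self-similarity scaling is applied correctly to the lattice increments — rather than the moment estimates themselves, which are immediate from the theorems of Section~\ref{sec:discrete} once the reduction is set up.
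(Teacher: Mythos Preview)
Your proposal is correct and follows essentially the same approach as the paper's proof: the same chaining sequence on $[0,1]^d$, the same self-similarity reduction \eqref{term_exp:to_be_estimated}/\eqref{bound-path property} to the maximal moments of the stationary increment fields $\bY^{(v)}$, the same appeal to Theorem~\ref{weak:eff:dim} in case~(2) and Remark~\ref{remark:LB_not_needed} in case~(1), and finally Proposition~\ref{propn:xiao_2010} with $\sigma(h)=h^{H-\theta_2/\alpha}$, absorbing the $\epsilon$ by the arbitrariness of $\gamma\in(0,\alpha)$. The only cosmetic difference is that the paper indexes $D_n$ over $0\le k_j\le 2^n-1$ rather than $1\le k_j\le 2^n$, which is immaterial.
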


\begin{proof} We first give the proof under condition $(2)$ (i.e., when $1\leq \alpha < 2$). In this case, define the sequence
 $\left\{D_{n}, n\geq 0 \right\}$ as,
$$
D_{n}=\left\{\left(\dfrac{k_{1}}{2^{n}},\dfrac{k_{2}}{2^{n}},\cdots,\dfrac{k_{d}}{2^{n}}\right):0\leq k_{j}\leq 2^{n} -1,
1\leq j\leq d\right\}.
$$
Then the sequence $\left\{D_{n}, n\geq 0 \right\}$  satisfies the assumptions
1) and 2) for a chaining sequence in Section 2.

Observe that for any $0<\gamma< \alpha $,
\begin{equation}
\label{bound-path property}
\begin{aligned}
& \mathbb{E}{\bigg(\displaystyle\max\limits_{\tau_{n}\in D_{n}}\displaystyle\max\limits_{\tau_{n-1}'\in O_{n-1}(\tau_{n})}
|X(\tau_{n})-X(\tau_{n-1}')|^{\gamma}\bigg)} \\
&\leq \sum_{v\in V}\mathbb{E}\Bigg(\displaystyle\max\limits_{0\leq k_{j}\leq 2^{n}-1,\forall j=1, \ldots, d}
\bigg \lvert X\bigg(\Big(\dfrac{k_{1}}{2^{n}},\cdots,\dfrac{k_{d}}{2^{n}}\Big)+\frac{v}{2^{n}}\bigg)  \\
& \;\;\;\;\;\;\;\;\;\;\;\;\;\;\;\;\;\;\;\;\;\;\;\;\;\;\;\;\;\;\;\;\;\;\;\;\;\;\;\;\;\;\;\;-X\bigg(\dfrac{k_{1}}{2^{n}},,\cdots,
\dfrac{k_{d}}{2^{n}}\bigg)\bigg\lvert^{\gamma}\Bigg)  \\
&= 2^{-n\gamma H}\sum_{v\in V}\mathbb{E}{\left(\displaystyle\max\limits_{0\leq k_{j}\leq 2^{n}-1,\forall j=1,\ldots, d}
\big|Y^{(v)}\left((k_{1},\cdots,k_{d})\right)|^{\gamma}\right)}   \\
&= 2^{-n\gamma H}\sum_{v\in V}\mathbb{E}\Big[\big(M^{(v)}_{2^{n}}\big)^{\gamma}\Big],
\end{aligned}
\end{equation}
where ${M^{(v)}}$ is the partial maxima sequence of the stationary $\sas$ random field $\bY^{(v)}$,
and where the first equality follows from the self-similarity of $\bX$. Under the assumption of Theorem \ref{label-result in case of tightness}
we have that for  some positive constants $\theta_1$, $\theta_2$, $c_1$ and $c_2$,
$$c_1 n^{\theta_1 / \alpha} \leq b^{(v)}_{n}\leq c_2 n^{\theta_2 / \alpha}.$$
It follows from Theorem~\ref{weak:eff:dim} that  the sequence $ \mathbb{E}\big[ \big({b^{(v)}_{n}}^{-1}{M^{(v)}_{n}}\big)^{\gamma}\big]$
is bounded above by a constant $K' >0$. Hence
$$
\mathbb{E}\left[\big(M^{(v)}_{n}\big)^{\gamma}\right] \leq K'\big( {b_n^{(v)}}\big)^{\gamma}
\leq K\,n^{\theta_{2} \gamma / \alpha}
$$
for a finite constant $K>0$.
Noting that the cardinality of $V$ is $|V|=3^{d}-1$, we have
$$
\mathbb{E}{\bigg(\displaystyle\max\limits_{\tau_{n}\in D_{n}}\displaystyle\max\limits_{\tau_{n-1}' \in O_{n-1}(\tau_{n})}
|X(\tau_{n})-X(\tau_{n-1}')|^{\gamma}\bigg)}\leq (3^{d}-1)K\, 2^{-n\gamma (H-\theta_{2} / \alpha)}.$$
It follows immediately from Proposition \ref{propn:xiao_2010} that for any $\epsilon > 0$ and $\gamma \in (0, \alpha)$,
$$\displaystyle\limsup_{h\to 0+}\dfrac{\sup_{t\in T}\sup_{|s-t|_\infty \leq h} |X(t)-X(s)|}
{h^{(H-\theta_{2}/\alpha) }(\log{1/h})^{(1+\epsilon)/\gamma}}=0\;\;a.s.
$$
Since $\epsilon > 0$ and $\gamma $ are arbitrary,  \eqref{label- path property 3}  follows.

Under condition $(1)$, the same proof will go through because when $0 < \alpha < 1$, the lower bound on
$ b^{(v)}_{n}$ is not needed for establishing $\mathbb{E}\left[\big(M^{(v)}_{n}\big)^{\gamma}\right]
\leq K\,n^{\theta_{2} \gamma / \alpha}$ for some $K >0$ (see Remark~\ref{remark:LB_not_needed} above).
This completes the proof of Theorem \ref{label-result in case of tightness}.
\end{proof}

The above theorem has three important consequences (see below) that describe how the uniform modulus of continuity changes for self-similar stable random fields with stationary increments as we pass from a dissipative action to a conservative one in the integral representation of the increment fields. The more is the strength of conservativity of the action, the lower is the value of the (weak) effective dimension of the increment fields and the smoother are the paths of the original field due to longer memory. This is the heuristic reason why the phase-transition conjecture of \cite{samorodnitsky:2004a} can be verified (through Corollaries \ref{label-result in case of dissipative flow}, \ref{label-result in case of effective dimension} and \ref{label-result in case of weak effective dimension}) for path properties of stable random fields.


\begin{cor}
\label{label-result in case of dissipative flow}
Let $\bX=\big\{X(t)\big\}_{t \in \mathbb{R}^{d}}$ be a real-valued $H$-self-similar $\sas$ random field with stationary
increments and with the integral representation \eqref{label-integral representation: tightness}.
If, for every vertex $v \in V$, the increment process $\bY^{(v)}$ 
defined as in Theorem \ref{label-result in case of tightness} is generated by a dissipative action and $\alpha>\dfrac{d}{H}$,
then for any $0<\gamma< \alpha$,
\begin{equation}
\label{label- path property 1}
\displaystyle\limsup_{h\to 0+}\dfrac{\sup_{t\in T}\sup_{|s-t|_\infty \leq h}|X(t)-X(s)|}{h^{(H-d/\alpha)}(\log{1/h})^{1/\gamma}}=0\;\;a.s.
\end{equation}
\end{cor}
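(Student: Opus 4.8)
The plan is to derive Corollary~\ref{label-result in case of dissipative flow} as a direct specialization of Theorem~\ref{label-result in case of tightness}. The key observation is that each increment field $\bY^{(v)}$, being stationary and generated by a dissipative action, must be full-dimensional; this is exactly the content of Proposition~4.1 in \cite{roy:samorodnitsky:2008} as recalled after Definition~\ref{weak:eff:dim:defn}. Hence $\bY^{(v)}$ has weak effective dimension $\theta_2 = d$, i.e. $b^{(v)}_n{}^\alpha$ is bounded above and below by constant multiples of $n^{d}$ for all large $n$. Thus the hypotheses of Theorem~\ref{label-result in case of tightness} are met with $\theta_2 = d$, provided $d < \alpha H$, which is precisely the assumption $\alpha > d/H$.

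First I would split into the two ranges of $\alpha$. When $1 \leq \alpha < 2$, full-dimensionality gives weak effective dimension bounded by (indeed, equal to) $d \in (0, \alpha H)$ for every $v \in V$, so condition~(2) of Theorem~\ref{label-result in case of tightness} holds verbatim. When $0 < \alpha < 1$, I would invoke the upper bound half of full-dimensionality: $b^{(v)}_n \leq K\, n^{d/\alpha}$ for all sufficiently large $n$ and all $v \in V$, with $d/\alpha < H$ equivalent to $\alpha > d/H$; this is exactly condition~(1) of Theorem~\ref{label-result in case of tightness} with $\theta_2 = d$. In either case, applying Theorem~\ref{label-result in case of tightness} with $\theta_2 = d$ yields, for every $0 < \gamma < \alpha$,
\[
\limsup_{h\to 0+}\frac{\sup_{t\in T}\sup_{|s-t|_\infty\leq h}|X(t)-X(s)|}{h^{(H-d/\alpha)}(\log{1/h})^{1/\gamma}}=0 \quad \text{a.s.},
\]
which is \eqref{label- path property 1}.

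I do not anticipate a genuine obstacle here: the corollary is essentially a bookkeeping exercise that records the dissipative case as the endpoint $\theta_2 = d$ of the weak-effective-dimension scale. The only point requiring a modicum of care is the justification that ``dissipative action $\Rightarrow$ full-dimensional,'' which I would handle by a one-line citation to Proposition~4.1 of \cite{roy:samorodnitsky:2008} (noting the equivalence with the mixed moving average representation via Theorem~3.3 there), together with the elementary upper bound $b^{(v)}_n{}^\alpha \leq n^d \|f^{(v)}\|_\alpha^\alpha$ already displayed in the discussion following Definition~\ref{weak:eff:dim:defn}. If one wishes to avoid the lower bound altogether, it suffices throughout to use only the upper bound on $b^{(v)}_n$ together with Theorem~\ref{weak:eff:dim} (for $1 \leq \alpha < 2$) or Remark~\ref{remark:LB_not_needed} (for $0 < \alpha < 1$), exactly as in the proof of Theorem~\ref{label-result in case of tightness}.
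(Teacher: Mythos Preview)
Your proof is correct, but it takes a different route from the paper's. The paper does not reduce to Theorem~\ref{label-result in case of tightness}; instead it repeats the chaining estimate~\eqref{bound-path property} and then invokes Theorem~\ref{label-Discrete MoM} directly, using the exact limit $n^{-d\gamma/\alpha}\E\big[(M^{(v)}_n)^\gamma\big]\to C$ in the dissipative case to produce the bound $\E\big[(M^{(v)}_{2^n})^\gamma\big]\leq K\,2^{n d\gamma/\alpha}$ before appealing to Proposition~\ref{propn:xiao_2010}. Your approach is more economical: by citing Proposition~4.1 of \cite{roy:samorodnitsky:2008} to get full-dimensionality, you feed $\theta_2=d$ straight into the already-proved Theorem~\ref{label-result in case of tightness} and avoid re-running the chaining computation. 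The paper's route has the minor conceptual advantage of using the \emph{sharp} convergence result of Theorem~\ref{label-Discrete MoM} rather than the bound of Theorem~\ref{weak:eff:dim}, but this extra sharpness is not needed for the conclusion, so your shortcut loses nothing.
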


\begin{proof} Considering the same chaining sequence as in the proof of Theorem \ref{label-result in case of tightness}
with the $\ell^{\infty}$ metric, we may proceed similarly as in \eqref{bound-path property} to derive that for any $0 < \gamma < \alpha$,
\begin{equation*}
\mathbb{E}\bigg(\displaystyle\max\limits_{\tau_{n}\in D_{n}}\displaystyle\max\limits_{\tau_{n-1}^{'}\in O_{n-1}(\tau_{n})}
|X(\tau_{n})-X(\tau_{n-1}^{'})|^{\gamma}\bigg) \leq 2^{-n\gamma H}\sum_{v\in V}\mathbb{E}\left[\big(M^{(v)}_{2^{n}-1}\big)^{\gamma} \right],
\end{equation*}
where ${M^{(v)}}$ is the partial maxima sequence of the stationary $\sas$ random field $\bY^{(v)}$.
From Theorem \ref{label-Discrete MoM} in Section \ref{sec:discrete}, when $\bY^{(v)}$ is generated by a dissipative action, we have
\begin{equation}
\label{label- compuation of rate of growth}
\lim_{n\to\infty}\mathbb{E}\Big[(2^{n}-1)^{-\gamma{d/{\alpha}}} {\big(M^{(v)}_{2^{n}-1}\big)^{\gamma}}\Big]= c,
\end{equation}
where $c>0$ is a finite constant. Hence, there exists a finite constant $K$ such that
$$
\mathbb{E}{\left(\displaystyle\max\limits_{\tau_{n}\in D_{n}}\displaystyle\max\limits_{\tau_{n-1}^{'}\in O_{n-1}(\tau_{n})}
|X(\tau_{n})-X(\tau_{n-1}^{'})|^{\gamma}\right)}\leq K\, 2^{-n\gamma (H-d/\alpha)}, 
$$
for all sufficiently large $n$. It is now clear that \eqref{label- path property 1}  follows from Proposition \ref{propn:xiao_2010}.
\end{proof}


\begin{cor}
\label{label-result in case of effective dimension}
Let $\bX=\big\{X(t)\big\}_{t \in \mathbb{R}^{d} }$ be a real-valued $H$-self-similar random field with stationary increments as in Theorem
\ref{label-result in case of tightness}. If, for every vertix $v \in V$, the increment process $\bY^{(v)}$
has effective dimension $p\leq d$ and $\alpha>\dfrac{p}{H}$, then for any $0<\gamma< \alpha,$
\begin{equation*}
\label{label- path property 4}
\displaystyle\limsup_{h\to 0+}\dfrac{\sup_{t\in T}\sup_{|s-t|_\infty \leq h}|X(t)-X(s)|}{h^{(H-p/\alpha)}(\log{1/h})^{1/\gamma}}=0\;\;a.s.
\end{equation*}
\end{cor}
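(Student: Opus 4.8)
The plan is to derive Corollary~\ref{label-result in case of effective dimension} as a direct specialization of Theorem~\ref{label-result in case of tightness}. The key observation is that if each increment field $\bY^{(v)}$ has effective dimension $p$ in the sense of \cite{roy:samorodnitsky:2008}, then by Proposition~5.1 of \cite{roy:samorodnitsky:2008} we may take $\theta_1 = \theta_2 = p$ in Definition~\ref{weak:eff:dim:defn}, i.e., there are constants $c_1, c_2 > 0$ with $c_1 n^{p} \leq \big(b_n^{(v)}\big)^\alpha \leq c_2 n^{p}$ for all large $n$ and every $v \in V$. In particular, each $\bY^{(v)}$ has weak effective dimension bounded by $\theta_2 = p$.

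Next I would split into the two regimes of $\alpha$ exactly as in Theorem~\ref{label-result in case of tightness}. The hypothesis $\alpha > p/H$ is equivalent to $\theta_2 = p < \alpha H$, so the assumption $\theta_2 \in (0, \alpha H)$ of Theorem~\ref{label-result in case of tightness} is met. When $1 \le \alpha < 2$, the condition that each $\bY^{(v)}$ has weak effective dimension bounded by $p$ is precisely hypothesis~(2) of Theorem~\ref{label-result in case of tightness}, so the theorem applies verbatim with $\theta_2 = p$. When $0 < \alpha < 1$, the upper bound $b_n^{(v)} \le c_2^{1/\alpha} n^{p/\alpha}$ supplies hypothesis~(1) of that theorem (with $K = c_2^{1/\alpha}$), and again Theorem~\ref{label-result in case of tightness} applies with $\theta_2 = p$. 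In either case the conclusion \eqref{label- path property 3} reads
\[
\limsup_{h\to 0+}\dfrac{\sup_{t\in T}\sup_{|s-t|_\infty\leq h} |X(t)-X(s)|}{h^{(H-p/\alpha)}(\log{1/h})^{1/\gamma}} = 0 \quad \text{a.s.}
\]
for every $0 < \gamma < \alpha$, which is exactly the assertion of the corollary.

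Since the whole argument is a substitution of parameters into an already-proven theorem, there is no real obstacle; the only point requiring care is verifying that effective dimension $p$ indeed yields the two-sided power bound on $b_n^{(v)}$ with matching exponents, for which one cites Proposition~5.1 of \cite{roy:samorodnitsky:2008} together with the discussion following Definition~\ref{weak:eff:dim:defn} in this paper. One should also note that the effective dimension hypothesis is imposed uniformly over the finitely many vertices $v \in V$, so the constants $c_1, c_2$ may be chosen uniformly by taking the worst case over $V$, and the translation $p < \alpha H \iff \alpha > p/H$ is immediate.
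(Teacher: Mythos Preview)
Your proof is correct. It differs slightly from the paper's own argument: the paper proves this corollary by following the template of Corollary~\ref{label-result in case of dissipative flow} (re-deriving the chaining bound \eqref{bound-path property} and then invoking Proposition~\ref{propn:xiao_2010}), with Theorem~\ref{label-Discrete Moments of maxima gtp} supplying the moment estimate $\E\big[(M_n^{(v)})^\gamma\big]\le K n^{p\gamma/\alpha}$ in place of Theorem~\ref{label-Discrete MoM}. You instead route through the more general Theorem~\ref{label-result in case of tightness} by first observing (via Proposition~5.1 of \cite{roy:samorodnitsky:2008} and the remark after Definition~\ref{weak:eff:dim:defn}) that effective dimension $p$ forces weak effective dimension equal to $p$, and then substituting $\theta_2=p$. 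Both are one-line reductions to earlier results; your version is arguably cleaner because Theorem~\ref{label-result in case of tightness} already packages the chaining argument, whereas the paper's route re-traces that argument explicitly.
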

\begin{proof}
The proof follows similarly along the lines of Corollary \ref{label-result in case of dissipative flow}
by using the bound on moments in terms of the effective dimension in Theorem \ref{label-Discrete Moments of maxima gtp}.
\end{proof}


\begin{cor}
\label{label-result in case of weak effective dimension}
Let $\bX=\big\{X(t)\big\}_{t \in \mathbb{R}^{d} }$ be a real-valued $H$-self-similar random field with stationary increments as in Theorem
\ref{label-result in case of tightness}. If for every vertex $v \in V$, the increment field $\bY^{(v)}$
has weak effective dimension bounded by $\theta_2 \in (0, \alpha H)$, then for any $0<\gamma< \alpha$,
\begin{equation}
\label{label- path property 5}
\displaystyle\limsup_{h\to 0+}\dfrac{\sup_{t\in T}\sup_{|s-t|_\infty \leq h}|X(t)-X(s)|}{h^{(H-\theta_2/\alpha)}(\log{1/h})^{1/\gamma}}=0\;\;a.s.
\end{equation}
\end{cor}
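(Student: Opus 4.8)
The plan is to read this off directly from Theorem~\ref{label-result in case of tightness}: the hypothesis that $\bY^{(v)}$ has weak effective dimension bounded by $\theta_2 \in (0,\alpha H)$ for every $v \in V$ is precisely designed so that one of the two branches of that theorem applies, and no genuinely new argument is needed.

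First I would separate the two ranges of the stability index. When $1 \le \alpha < 2$, the assumption here is condition~(2) of Theorem~\ref{label-result in case of tightness} word for word, so \eqref{label- path property 5} is immediate from \eqref{label- path property 3}. When $0 < \alpha < 1$, I would instead unpack Definition~\ref{weak:eff:dim:defn}: the bound \eqref{weak:eff} supplies a constant $c_2 > 0$ with ${b^{(v)}_n}^{\alpha} \le c_2 n^{\theta_2}$, hence $b^{(v)}_n \le c_2^{1/\alpha} n^{\theta_2/\alpha}$, for all large $n$; this is exactly the hypothesis of condition~(1) of Theorem~\ref{label-result in case of tightness} with $K = c_2^{1/\alpha}$, so again \eqref{label- path property 5} follows. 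Since every non-Gaussian stable index falls in one of these two ranges, this covers all cases.

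The main obstacle, such as it is, is purely a matter of checking that the hypotheses line up, and in particular that the lower bound $c_1 n^{\theta_1} \le {b^{(v)}_n}^{\alpha}$ appearing in \eqref{weak:eff} plays no role when $0 < \alpha < 1$; this is exactly the content of Remark~\ref{remark:LB_not_needed} (equivalently, Theorem~\ref{weak:eff:dim} is not even needed in that regime). One should also note that the phrase ``weak effective dimension bounded by $\theta_2$'' already forces $\theta_2 \le d$ by Definition~\ref{weak:eff:dim:defn}, so the interval $(0,\alpha H)$ in the statement is implicitly intersected with $(0,d]$ and no inconsistency arises. All the analytic substance --- the chaining estimate of Proposition~\ref{propn:xiao_2010}, the self-similarity reduction to the increment fields $\bY^{(v)}$, and the uniform moment bound of Theorem~\ref{weak:eff:dim} --- has already been carried out in the proof of Theorem~\ref{label-result in case of tightness}.
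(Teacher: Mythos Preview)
Your proposal is correct and takes essentially the same approach as the paper, which simply states that the result follows immediately from the arguments in Theorem~\ref{label-result in case of tightness}. You are in fact more careful than the paper in explicitly separating the cases $0<\alpha<1$ and $1\le\alpha<2$ and noting via Remark~\ref{remark:LB_not_needed} why the lower bound in \eqref{weak:eff} is irrelevant in the former range.
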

\begin{proof}
The proof follows immediately by using the same arguments as the second part of Theorem \ref{label-result in case of tightness}.
\end{proof}

\begin{remark} \textnormal{(i)~If the (weak) effective dimension of the increment fields in Corollaries \ref{label-result in case of effective dimension} and \ref{label-result in case of weak effective dimension} are strictly less than $d$ (i.e., when we are not in the full-dimensional case), our uniform modulus of continuity results improve the leading (polynomial) term of the existing ones (see, for example, \cite{xiao:2010} and the references therein). On the other hand, in the full-dimensional case (i.e., in Corollary~\ref{label-result in case of dissipative flow}), we better the logarithmic term in the modulus of continuity.}

\textnormal{(ii)~From the proof of Corollary~\ref{label-result in case of weak effective dimension}, it transpires that even when the weak effective dimension of  $\bY^{(v)}$ is bounded by $\theta_2(v)$ (that may depend on $v \in V$), \eqref{label- path property 4} holds with $\theta_2$ replaced by $\max_{v \in V} \theta_2(v)$ as long as this maximum is strictly less than $\alpha H$. A similar comment applies to Corollary~\ref{label-result in case of effective dimension} above.}
\end{remark}

\section{Examples}\label{sec:examples}

The theorems in Sections 5 can be applied to various classes of self-similar random fields with stationary increments.
In the following, we mention two examples of them: linear fractional stable motion and harmonizable fractional stable motion.
We refer to  \cite{samorodnitsky:taqqu:1994} for more information on these two classes of important self-similar stable processes.
For further examples of self-similar processes with stationary increments, see \cite{pipiras:taqqu:2002a}.

\subsection{Linear fractional stable motion}

For any given  constants $0 < \alpha < 2$ and $H \in (0,
1)$, we define a $\sas$ process $Z^{H} = \{Z^{
H}(t)\}_{ t \in \bbR_+}$ with values in $\bbR$ by
\begin{equation}\label{Eq:Rep-LFSS}
Z^{H}(t) =  \kappa\, \int_{\bbR} \Big\{(t-s)_+^{H-1/\alpha} -
(-s)_+^{H-1/\alpha} \Big\} \, M_\alpha(ds),
\end{equation}
where $\kappa > 0$ is a normalizing constant, $t_{+} = \max\{t, 0\}$ and
$M_\alpha$ is a $\sas$ random measure with
Lebesgue control measure. 

Using (\ref{Eq:Rep-LFSS}) one can verify that the stable process
$Z^{H}$ is $H$-self-similar and
has stationary increments. It is a stable analogue of fractional
Brownian motion, and it called a linear fractional stable motion (LFSM).

Many sample path properties of $Z^{H}$ are different from those of
fractional Brownian motion. For example, \cite{maejima:1983} showed that,
if $H \alpha < 1$, then $Z^H$ has a. s. unbounded sample functions
on all intervals. \cite{takashima:1989} showed that, if $ H \alpha > 1$, then
the index of uniform H\"older continuity of $Z^H$ is $H - \frac 1 \alpha$.

In order to apply the results in Section 5, we consider for every $v \in \{-1, 1\}$ the increment process
\begin{equation}\label{Eq:Rep-Y1}
Y^{(v)}(t) = \int_{\bbR} \Big\{(t+v-s)_+^{H-1/\alpha} -
(t-s)_+^{H-1/\alpha} \Big\} \, M_\alpha (ds).
\end{equation}
Then for any $n \ge 1$,
\begin{equation*}
\label{Eq:bnL}
b_n^{(v)} = \left(\int_{\mathbb R} \max_{0 \leq k\leq n-1} \Big| (k+v -s)_+^{H - 1/\alpha} - (k -s)_+^{H - 1/\alpha}\Big|^{\alpha} \, ds\right)^{1/\alpha}.
\end{equation*}
For simplicity, we only consider the case of $v = 1$ and write $b_n^{(v)}$ as $b_n$. The case of $v = -1$ can be treated the same way.
For integers $k \in \{0, 1, \ldots, n-1\}$, let $g_k(s) = (k+1 -s)_+^{H - 1/\alpha} - (k -s)_+^{H - 1/\alpha}$. It is easy to see that
for each fixed $s \le k$, the sequence $g_k(s)$ is non-negative and non-increasing in $k$. We write $b_n^\alpha$ as
\begin{equation}
\label{Eq:bnL1}
\begin{split}
b_n^\alpha &= \int_{-\infty}^0  \max_{0 \leq k\leq n-1} g_k(s)^\alpha ds + \sum_{\ell = 0}^{n-1} \int_{\ell}^{\ell +1}  \max_{0 \leq k\leq n-1} g_k(s)^\alpha ds\\
& =   \int_{-\infty}^0 g_0(s)^\alpha ds + \sum_{\ell = 0}^{n-1} \int_{\ell}^{\ell +1}  \max \Big\{g_{\ell }(s), g_{\ell+1}(s)\Big\}^\alpha ds,
\end{split}
\end{equation}
where $g_n\equiv 0$. Now it is elementary to verify that each of the $(n+1)$ integrals in the right hand side of \eqref{Eq:bnL1} is
a positive and finite constant depending only on $\alpha$ and $H$. Except the first and the last integrals,  all the other integrals
are equal. Consequently, there is a positive and finite constant $K$  such that
\[
\lim_{n \to \infty} n^{-1/\alpha}  b_n = K.
\]
Hence, for $v\in \{1, -1\}$, the weak effective dimension of the stationary $\sas$ process $\big\{Y^{(v)} (n)\big\}_{n \in {\bbZ} }$
is 1. It can be verified that
\[
\sum_{k \in {\bbZ}} \big|g_k(s)\big|^\alpha < \infty \quad \ \hbox{ for a.e. } s \in \mathbb R.
\]
It follows from Corollary 4.2 of Rosi\'nski (1995) that $\big\{Y^{(v)} (n)\big\}_{n \in {\bbZ} }$ is generated by a dissipative flow.
Moreover, Condition (2) of Theorem 4.1 is satisfied with $\theta_1 = \theta_2 = 1$. It follows
from \eqref{label- path property 5}  that, if $H > 1/\alpha$ then  for
any $0<\gamma< \alpha$,
\begin{equation}
\label{LFSM-1}
\displaystyle\limsup_{h\to 0+}\dfrac{\sup_{t\in [0, 1]}\sup_{|s-t|\leq h}|Z^H(t)-Z^H(s)|}{h^{(H-1/\alpha)}(\log{1/h})^{1/\gamma}}=0\;\;a.s.
\end{equation}

This result improves Theorem 2 in \cite{kono:maejima:1991a}. We mention that, by using more delicate analysis,  \cite{takashima:1989}
established the exact uniform and local moduli of continuity of linear fractional stable motion $Z^H$ with $H > 1/\alpha$, and \cite{Balanca}
studied the multifractal property of $Z^H$ by using the 2-microlocal formalism.


\subsection{Harmonizable fractional stable motion}

For any given $\alpha \in (0,2)$ and  $H \in (0, 1)$, let $ \widetilde{Z}^{H}= \{\widetilde{Z}^{H}(t)\}_{t \in \bbR}$ be the real-valued
harmonizable fractional $\sas$ process (HF$\alpha$SF or HFSF, for brevity) with
Hurst index $H$, defined by:
\begin{equation}
\label{Eq:defhfsf}
\widetilde{Z}^{H}(t) :=  \widetilde \kappa\, {\rm Re} \int_{\bbR} \frac{ e^{it x}-1} {|x|^{H+1/\alpha}} \,\widetilde{M}_\alpha(dx),
\end{equation}
where 
$\widetilde \kappa$ is the positive normalizing constant given by
\begin{equation}\label{kappa}
\widetilde \kappa= 2^{-1/2} \bigg(\int_{\bbR}
 \frac{\big(1 - \cos x\big)^{\alpha/2}}{|x|^{\alpha H +1}}\, dx\bigg)^{-1/\alpha},
\end{equation}
Re denotes the real-part, and  $\widetilde{M}_\alpha$ a complex-valued rotationally invariant $\alpha$-stable random measure
with Lebesgue control measure.

For every $v \in \{-1, 1\}$ consider the increment process
\begin{equation}\label{Eq:Rep-Y2}
\widetilde{Y}^{(v)}(t) =  \widetilde \kappa\, {\rm Re} \int_{\bbR}  \frac{ e^{i (t+v) x}-e^{itx}} {|x|^{H+1/\alpha}} \,\widetilde{M}_\alpha(dx).
\end{equation}
Then for any integer $n \ge 1$,
\begin{equation*}
\label{Eq:bnH}
\begin{split}
b_n^{(v)} &=  \widetilde \kappa \left(\int_{\mathbb R} \max_{0 \leq k\leq n-1} \Big|  e^{i (k+v) x}-e^{i kx} \Big|^{\alpha} \,
\frac{dx}{|x|^{\alpha + H} }\right)^{1/\alpha}\\
&=  \widetilde \kappa \left(\int_{\mathbb R} \big| e^{i v x}- 1\big|^\alpha \, \frac{dx} {|x|^{\alpha + H} }\right)^{1/\alpha},
\end{split}
\end{equation*}
which is independent  of $n$. This implies that the weak effective dimension of the stationary $\sas$ process $\big\{
\widetilde Y^{(v)} (n)\big\}_{n \in {\bbZ} }$ is 0. Applying again Corollary 4.2 of Rosi\'nski (1995), one can verify that $\big\{\widetilde
Y^{(v)} (n)\big\}_{n \in {\bbZ} }$ is generated by a conservative flow.

We remark that the results in Section 3 are not applicable for determining the magnitude of
the maximal moments ${\mathbb E} \big[\max\limits_{0 \le k \le n-1} |\widetilde Y^{(v)}(k)|^\gamma\big]$ for
$\gamma \in (0, \alpha).$ By appealing to the fact that $\widetilde{Y}^{(v)}(t)$ is conditionally Gaussian, see
\cite{bierme2009holder, bierme2015modulus}, or \cite{kono:maejima:1991a}, we can modify the proof of Proposition
4.3 in  \cite{xiao:2010} to derive the following upper and lower bounds 
\begin{equation}
\label{Eq:mmH}
K \le {\mathbb E} \Big[\max_{0 \le k \le n-1} |Y^{(v)}(k)|^\gamma\Big] \le K' \big(\log n\big)^{\gamma/2},
\end{equation}
where $K$ and $K'$ are positive and finite constants. We omit a detailed verification of \eqref{Eq:mmH} here
because it is lengthy and does not produce the optimal bounds. We believe that the upper bound in \eqref{Eq:mmH}
is optimal. In the case of $\alpha = 2$, this can be proved by applying the Sudakov minoration
(see Lemma 2.1.2 in \cite{talagrand2006generic}).

It follows from \eqref{Eq:mmH}, \eqref{bound-path property} and Proposition \ref{propn:xiao_2010} with $\sigma(h)
= h^H \big|\log 1/h\big|^{1/2}$ that for any $\epsilon > 0$,
\begin{equation}
\label{HFSM-1}
\displaystyle\limsup_{h\to 0+}\dfrac{\sup_{t\in [0, 1]}\sup_{|s-t|\leq h}|\widetilde Z^H(t)- \widetilde Z^H(s)|}
{h^{H}(\log{1/h})^{\frac 1 2 +\frac {1} \alpha + \epsilon}}=0\;\;a.s.
\end{equation}
This recovers Theorem 1 in \cite{kono:maejima:1991a}. However, it is an open problem to
determine the exact uniform modulus of continuity for HFSM $ \widetilde{Z}^{H}$.

Even though LFSM $Z^{H}$ and HFSM $ \widetilde{Z}^{H}$ are both $H$-self-similar with stationary increments,
their properties are very different.
By the exact modulus of continuity in \cite{takashima:1989} and \eqref{HFSM-1}, it is clear that the laws of  $Z^{H}$
and $\widetilde{Z}^{H}$ are singular with respect to each other.

\section{Proofs from Section~\ref{sec:discrete}} \label{sec:proofs}
In this section, we present the proofs of the three theorems from Section~\ref{sec:discrete} that solve the open problem mentioned in the paper of \cite{xiao:2010}. The key idea is to encash a series representation given in \cite{samorodnitsky:2004a} (and follow the proof of Theorem~4.1 therein) to obtain sharp tail bounds for the lower powers of maxima of stationary $\sas$ random fields and then invoke dominated convergence theorem.

\subsection{Proof of Theorem~\ref{thm1:maximal_moment}}

In the following,  $\{\Gamma_n\}_{n\ge 1}$ denotes  a sequence of arrival times of a unit rate Poisson process on
$(0, \infty)$,  $\{\xi_n\}_{n\ge 1}$ are i.i.d. Rademacher random variables, and $\{U_\ell^{(n)}\}_{n\ge 1}$ ($\ell = 1, 2$)
are i.i.d. $S$-valued random variables with common law $\eta_n$ whose density is given by
\[
\frac{d\eta_n} {d \mu} = b_n^{-\alpha} \max_{{\bf 0}\leq t\leq (n-1)\bf{1}}|f_t(s)|^{\alpha}, \quad s \in S.
\]
All four sequences are independent. We will make use of the following series representation for
$\{Y_{k}, {\bf 0} \leq k\leq (n-1)\bf{1} \}$:
\[
Y_{k} \eqdef b_n C_\alpha^{1/\alpha}\sum_{j=1}^\infty \xi_j \Gamma_j^{-1/\alpha} \frac{f_k (U_j^{(n)})}
{\max_{m\in [{\bf{0}},(n-1)\bf{1}]}|f_m(U_j^{(n)})|}.
\]
See Section 3.10 in \cite{samorodnitsky:taqqu:1994}.

We first consider  case (1)  when $\bY$ is generated by a dissipative action, it follows from
 (\ref{label-quantity that controls maximal functional})  that the deterministic sequence
 $ \{b_n\}_{n\geq 1}$ satisfies
 \begin{equation}
 \label{lim-maximal}
 \lim_{n\to\infty} n^{-d/\alpha}b_n=\tilde{c}_{\bY},
\end{equation}
where $\tilde{c}_{\bY} > 0$ is a constant.
This implies that $b_n$ satisfies condition (4.6) in
\cite{samorodnitsky:2004a}, namely
\begin{equation*}
\textbf{(LB):} \;\;\;\;b_n \geq c n^{\theta} \text{ for some constant } c>0
\end{equation*}
with $\theta=d/\alpha$. Additionally, its
condition (4.8) given by
\begin{equation*}
\textbf{(LL):} \;\; \mathbb{P}\left[\text{ for some } k \in [{\bf{0}}, (n-1){\bf 1}],\;\frac{f_k (U_j^{(n)})}
{\max_{m\in [{\bf{0}},(n-1)\bf{1}]}|f_m(U_j^{(n)})|}, \; j=1,2\right] \to 0
\end{equation*}
as $n\to \infty$
also holds; thanks to Remark 4.2 in \cite{samorodnitsky:2004a} (or Remark 4.4 in
\cite{roy:samorodnitsky:2008}).
Further, (\ref{lim-maximal}) implies that for any $ p> \alpha$, there is a finite constant $A$ such that
$$
\textbf{(UB):}   \qquad\qquad
 n^{d} b_{n}^{-p}< n^{d} b_{n}^{-\alpha} \leq A.$$
Let $K=d$, $\epsilon$ and $\delta$ be chosen such that
\begin{equation*}
\label{choice-ep}
0< \epsilon < \dfrac{\delta}{K}.
\end{equation*}
Then we obtain from (4.21) in \cite{samorodnitsky:2004a} the following upper bound on the tail distribution of
$b_n^{-1} M_n$ under \textbf{(LB)} and \textbf{(LL)}:
\begin{equation}
\label{bound-ini}
\P \big(b_n^{-1} M_n> \lambda \big) \leq \P \big(C_{\alpha}^{1/\alpha}\Gamma_1^{-1/\alpha}>\lambda(1-\delta) \big)
+\phi_n (\epsilon,\lambda)+\psi_n (\epsilon,\delta,\lambda).
\end{equation}
Below, we explain the terms $\phi_n (\epsilon,\lambda)$ and $\psi_n (\epsilon,\delta,\lambda)$ used in
the above equation and derive some (preliminary) upper bounds on them.\\

\begin{equation}
\label{phi}
\begin{split}
\phi_n (\epsilon,\lambda)&= \P \bigg( 
\exists\, k\in [{\bf{0}},(n-1){\bf 1}], \, \;\;
\dfrac{ \Gamma_{j}^{-1/\alpha} |f_k(U_j^{(n)})|}{\max_{m\in [{\bf{0}},(n-1)\bf{1}]}|f_m(U_j^{(n)})|}>
\frac{\epsilon \lambda }{C_\alpha^{1/\alpha}} \\
& \qquad \qquad \qquad \qquad \qquad \;\;\text{ for at least 2 different } j \bigg) \\
&\le n^d \P\bigg( \Gamma_{j}^{-1/\alpha}> \frac{b_n\epsilon \lambda}{C_{\alpha}^{1/\alpha}\|f\|_{\alpha}}
 \; \text{ for at least 2 different } j \bigg).
\end{split}
\end{equation}
In deriving the last inequality, we have applied the fact that for every $k\in [{\bf{0}},(n-1){\bf{1}}] $, the points
$$
b_n \xi_j \Gamma_j^{-1/\alpha}\cfrac{f_k(U_j^{(n)})}{\max_{{\bf{0}}\leq s\leq (n-1)\bf{1}}|f_s(U_j^{(n)})|}, \quad j = 1, 2 , \cdots
$$
have the same  joint distribution as the points
$$ \xi_j \|f\|_{\alpha}\Gamma_j^{-1/\alpha},  \quad j = 1, 2 , \cdots $$
which represent a symmetric Poisson random measure on $\mathbb{R}$ with mean measure
\begin{equation}
\label{mean-measure-discrete}
\Lambda((x,\infty))=x^{-\alpha}\|f\|_{\alpha}^{\alpha}/2,\ \  \text{ for } x> 0.
\end{equation}
In the above, the function $f$ is given in (\ref{repn_integral_stationary2}) and $\|f\|_{\alpha}
 = \left(\int_S |f(s)|^\alpha\mu(ds)\right)^{1/\alpha}$. Similarly,
we have
\begin{equation}
\label{psi}
\begin{split}
\psi_n (\epsilon,\delta,\lambda) &= \P\bigg(\max_{k\in [{\bf{0}},(n-1)\bf{1}] }\bigg\lvert \sum_{j=1}^{\infty}
\dfrac{\xi_j \Gamma_j^{-1/\alpha} |f_k(U_j^{(n)})|}{\max_{m\in [{\bf{0}},(n-1)\bf{1}]}|f_m(U_j^{(n)})|}\bigg\lvert >
\frac{ \lambda} {C_{\alpha}^{1/\alpha}\|f\|_{\alpha}}, \ \nonumber\\
& \qquad\qquad\qquad  \Gamma_1^{-1/\alpha}\leq \frac{b_n\lambda(1-\delta)}{C_{\alpha}^{1/\alpha}\|f\|_{\alpha}}, 
\text{ and } \Gamma_2^{-1/\alpha}\leq \frac{b_n \lambda \epsilon}{C_{\alpha}^{1/\alpha}\|f\|_{\alpha}} \bigg)\nonumber \\
&\leq  n^{d}\P\bigg(\Big\lvert \sum_{j=1}^{\infty}\xi_j\Gamma_j^{-1/\alpha}\Big\lvert >  \frac{b_n \lambda}
{C_{\alpha}^{1/\alpha}\|f\|_{\alpha}}, \
   \Gamma_1^{-1/\alpha}\leq \frac{b_n\lambda(1-\delta)}{C_{\alpha}^{1/\alpha}\|f\|_{\alpha}}, \nonumber \\
& \qquad \qquad \qquad \qquad \qquad\qquad\qquad \;\;\;\;\;  \text{ and } \Gamma_2^{-1/\alpha}\leq \frac{b_n \lambda \epsilon}
{C_{\alpha}^{1/\alpha}\|f\|_{\alpha}} \bigg).
\end{split}
\end{equation}

For any $0<\beta<\alpha$, by using the tail bound in \eqref{bound-ini} we have
\begin{equation}
\label{MoM-1}
\begin{split}
\E\big[b_n^{-\beta} M_n^{\beta} \big] &= \int_0^\infty \P \big(b_n^{-1} M_n> \tau^{1/\beta}\big)d\tau   \\
&\leq   \int_0^\infty \P\big(C_{\alpha}^{1/\alpha}\Gamma_1^{-1/\alpha}>\tau^{1/\beta}(1-\delta)\big)d\tau  \\
&\qquad +  \int_0^\infty \phi_n (\epsilon,\tau^{1/\beta})d\tau +\int_0^\infty \psi_n (\epsilon,\delta,\tau^{1/\beta})d\tau   \\
&:= T_1(\delta) + T_2^{(n)}(\epsilon)  + T_3^{(n)}(\epsilon, \delta).
\end{split}
\end{equation}
It is shown in \cite{samorodnitsky:2004a} that for every $\tau>0$,
$$\phi_n (\epsilon, \tau^{1/\beta}), \text{ and } \psi_n (\epsilon,\delta,\tau^{1/\beta}) \text{ converge to } 0,$$
as $n\to \infty$ for choices of $\epsilon$ adequately smaller in comparison to $\delta$.\\

Next we present non-trivial integrable bounds on $(1,\infty)$ for integrands $\phi_n (\epsilon,\tau^{1/\beta})$ and
$\psi_n (\epsilon,\delta,\tau^{1/\beta})$ in $T_2^{(n)}(\epsilon) \text{ and } T_3^{(n)}(\epsilon, \delta)$ in \eqref{MoM-1}
 respectively, and use the trivial bound of $1$ on $(0,1)$. Finally, we apply the Dominated Convergence
 Theorem (DCT) to show that the terms
 $T_2^{(n)}(\epsilon) \text{ and } T_3^{(n)}(\epsilon, \delta)$ converge to $0$ as $n\to \infty$.

We begin by providing an integrable upper bound for $\phi_n (\epsilon,\tau^{1/\beta})$ on $(1,\infty)$. It follows from (\ref{phi})
that
\begin{equation}
\label{phi-bound}
\begin{split}
\phi_n (\epsilon,\tau^{1/\beta})
&\leq  n^d \P\bigg( \sum_{j=1}^\infty 1_{ \xi_j \|f\|_{\alpha}\Gamma_j^{-1/\alpha}}\Big\{ \big(-\infty,\, -  C_\alpha^{-1/\alpha} b_n\epsilon \tau^{1/\beta} \big) \\
& \qquad\qquad \qquad \qquad \qquad \;\cup \big( C_\alpha^{-1/\alpha} b_n \epsilon \tau^{1/\beta},\, \infty \big)\Big\}\geq 2\bigg)  \\
&= n^d \P(\text{Poi}(\Lambda(B_n))\geq 2),
\end{split}
\end{equation}
where
$$
B_n= \big(-\infty, -C_\alpha^{-1/\alpha}b_n\epsilon \tau^{1/\beta} \big)\cup \big( C_\alpha^{-1/\alpha}b_n\epsilon \tau^{1/\beta}, \infty\big)
$$
and we have used the fact that
$$
\sum_{j=1}^\infty 1_{ \xi_j \|f\|_{\alpha}\Gamma_j^{-1/\alpha}}\{B_n\}\sim \text{Poi}(\Lambda(B_n)).
$$
Thus, the Markov inequality and definition \eqref{mean-measure-discrete} of the mean measure $\Lambda$ imply
\begin{equation}
\label{phi-bound2}
\begin{split}
\phi_n (\epsilon,\tau^{1/\beta})  &\leq n^d \cfrac{\E(\text{Poi}(\Lambda(B_n)))}{2}= n^d \Lambda(B_n)/2   \\
&= n^d b_n^{-\alpha}\cfrac{C_\alpha^{-1}\epsilon^{-\alpha}}{\tau^{\alpha/\beta}}   \\
&\leq  A \cfrac{C_\alpha^{-1}\epsilon^{-\alpha}}{\tau^{\alpha/\beta}} \text{ using \textbf{(UB)}.}
\end{split}
\end{equation}
The last term in \eqref{phi-bound2} is clearly integrable in $\tau$ on $(1,\infty)$.
We apply DCT to $T_2^{(n)}(\epsilon)$ as
\begin{equation*}
\label{DCT_T2}
\begin{aligned}
T_2^{(n)}(\epsilon) &= \int_0^1 \phi_n (\epsilon,\tau^{1/\beta})d\tau +\int_1^\infty \phi_n (\epsilon,\tau^{1/\beta})d\tau
\end{aligned}
\end{equation*}
by using the trivial bound of $1$ on $(0,1)$ and the bound derived in \eqref{phi-bound2} on $(1,\infty)$ to conclude
\begin{equation*}
\label{T2-lim}
T_2^{(n)}(\epsilon)\to 0\ \text{ as } \ n\to \infty.
\end{equation*}
 We next derive an upper bound for $\psi_n (\epsilon,\delta,\tau^{1/\beta}) $. It follows from (\ref{psi}) that
 $\psi_n (\epsilon,\delta,\tau^{1/\beta})$ is  bounded from above by
\begin{equation}
\label{psi-bound}
\begin{split}
&  n^{d}\P\bigg(\Big\lvert C_{\alpha}^{1/\alpha}\sum_{j=1}^{\infty}\xi_j\Gamma_j^{-1/\alpha}\Big\lvert >
\frac{b_n\tau^{1/\beta}}{\|f\|_{\alpha}},  \
C_{\alpha}^{1/\alpha}\Gamma_1^{-1/\alpha}\leq \frac{b_n\tau^{1/\beta}(1-\delta)}{\|f\|_{\alpha}},  \\
& \qquad\qquad \;\;\;\;\;\; \text{ and } C_{\alpha}^{1/\alpha}\Gamma_j^{-1/\alpha}\leq \frac{b_n \tau^{1/\beta}\epsilon}{\|f\|_{\alpha}}
\text{ for all } j\geq 2 \bigg) \\
&\leq  n^{d}\P\bigg(C_{\alpha}^{1/\alpha}\Big|\sum_{j=K+1}^{\infty}\xi_j\Gamma_j^{-1/\alpha}\Big|>\frac{b_n\tau^{1/\beta}(\delta-\epsilon(K-1))}
{\|f\|_{\alpha}}\bigg)  \\
&\leq  n^{d}\P\bigg(C_{\alpha}^{1/\alpha}\Big|\sum_{j=K+1}^{\infty}\xi_j\Gamma_j^{-1/\alpha}\Big|
>\frac{b_n\tau^{1/\beta}\epsilon}{\|f\|_{\alpha}}\bigg)\\
&\leq  n^{d} b_{n}^{-p}\dfrac{\|f\|_{\alpha}^{p}\E \Big|C_{\alpha}^{1/\alpha}\displaystyle \sum\limits_{j=K+1}^{\infty}
{\xi_{j}\Gamma_{j}^{-1/\alpha} \Big|^{p}}}{\tau^{p/\beta}\epsilon^{p}}  \\
&\leq A \|f\|_{\alpha}^{p} \cfrac{\E \Big|C_{\alpha}^{1/\alpha}\displaystyle \sum\limits_{j=K+1}^{\infty}{\xi_{j}\Gamma_{j}^{-1/\alpha}\Big|^{p}}}
{\tau^{p/\beta}\epsilon^{p}} \ \ \text{ using \textbf{(UB)}.}
\end{split}
\end{equation}
For choice of $p$ such that  $\alpha <p<\alpha (K+1)$ in the Markov inequality in the third step of \eqref{psi-bound}, it is known
from  p.1451 of \cite{samorodnitsky:2004a} that
$$
\E \Big|C_{\alpha}^{1/\alpha}\displaystyle \sum\limits_{j=K+1}^{\infty}{\xi_{j}\Gamma_{j}^{-1/\alpha}\Big|^{p}}<\infty,
$$
we see that  \eqref{psi-bound} gives an integrable upper bound for $\psi_n (\epsilon,\delta,\tau^{1/\beta}) $ on $(1,\infty)$.
By a similar argument using the DCT, we have
\begin{equation*}
\label{T3-lim}
T_3^{(n)}(\epsilon)\to 0 \text{ as } n\to \infty.
\end{equation*}

Using \eqref{MoM-1}, we complete the proof by noting
\begin{equation}
\begin{aligned}
\limsup_{n\to \infty} \E\big[b_n^{-\beta}M_n^{\beta}\big] &\leq  \int_{0}^{\infty}\P \big(\Gamma_1^{-1/\alpha}
>C_\alpha^{-1/\alpha}\tau^{1/\beta}(1-\delta) \big)d\tau \nonumber\\
&= \int_{0}^{\infty} \left(1-\exp\big(-C_\alpha \tau^{-\alpha/\beta}(1-\delta)^{-\alpha}\big)\right)d\tau.
\end{aligned}
\end{equation}
By letting $\delta\to 0^{+}$, and applying DCT again and using \eqref{lim-maximal}, we
have
\begin{equation*}
\label{limsup}
\limsup_{n\to \infty} \E\big[n^{-d\beta/\alpha}M_n^{\beta} \big]\leq  \tilde{c}^{\beta}_{\bY}C_{\alpha}^{\beta/\alpha}
\E\big[Z_{\alpha/\beta}\big].
\end{equation*}

The argument for establishing a corresponding lower bound is similar. We  start with the following lower
bound for the tail distribution of $b_n^{-1} M_n$  from \cite{samorodnitsky:2004a},
\begin{equation}\label{bound-low}
\P \big(b_n^{-1} M_n> \lambda \big) \geq \P \big(C_{\alpha}^{1/\alpha}\Gamma_1^{-1/\alpha}>\lambda(1+\delta) \big)
-\phi_n (\epsilon,\lambda)-\widetilde{\psi}_n (\epsilon,\delta,\lambda),
\end{equation}
where $\phi_n (\epsilon,\lambda)$ is the same as in \eqref{phi} and $\widetilde{\psi}_n (\epsilon,\delta,\lambda)$
is defined by
\begin{equation}
\label{tilde-psi}
\begin{split}
\widetilde{\psi}_n (\epsilon,\delta,\lambda) &= \P\bigg(\max_{k\in [{\bf{0}},(n-1)\bf{1}] }\bigg\lvert \sum_{j=1}^{\infty}
\dfrac{\xi_j \Gamma_j^{-1/\alpha} |f_k(U_j^{(n)})|}{\max_{m\in [{\bf{0}},(n-1)\bf{1}]}|f_m(U_j^{(n)})|}\bigg\lvert >
\frac{ \lambda} {C_{\alpha}^{1/\alpha}\|f\|_{\alpha}}, \ \nonumber\\
& \qquad\qquad\qquad  \Gamma_1^{-1/\alpha}\leq \frac{b_n\lambda(1+\delta)}{C_{\alpha}^{1/\alpha}\|f\|_{\alpha}}, 
\text{ and } \Gamma_2^{-1/\alpha}\leq \frac{b_n \lambda \epsilon}{C_{\alpha}^{1/\alpha}\|f\|_{\alpha}} \bigg)\nonumber \\
&\leq  n^{d}\P\bigg(\Big\lvert \sum_{j=1}^{\infty}\xi_j\Gamma_j^{-1/\alpha}\Big\lvert >  \frac{b_n \lambda}
{C_{\alpha}^{1/\alpha}\|f\|_{\alpha}}, \
   \Gamma_1^{-1/\alpha}\leq \frac{b_n\lambda(1+\delta)}{C_{\alpha}^{1/\alpha}\|f\|_{\alpha}}, \nonumber \\
& \qquad \qquad \qquad \qquad \qquad\qquad\qquad \;\;\;\;\;  \text{ and } \Gamma_2^{-1/\alpha}\leq \frac{b_n \lambda \epsilon}
{C_{\alpha}^{1/\alpha}\|f\|_{\alpha}} \bigg).
\nonumber
\end{split}
\end{equation}
By a similar argument leading to (\ref{MoM-1}), we obtain
\begin{equation}\label{MoM-1b}
\begin{split}
\E\big[b_n^{-\beta} M_n^{\beta} \big]
&\geq   \int_0^\infty \P\big(C_{\alpha}^{1/\alpha}\Gamma_1^{-1/\alpha}>\tau^{1/\beta}(1+\delta)\big)d\tau  \\
&\qquad -  \int_0^\infty \phi_n (\epsilon,\tau^{1/\beta})d\tau -\int_0^\infty \widetilde{\psi}_n (\epsilon,\delta,\tau^{1/\beta})d\tau   \\
&:= \widetilde{T}_1(\delta) - T_2^{(n)}(\epsilon)  -  \widetilde{T}_3^{(n)}(\epsilon, \delta).
\end{split}
\end{equation}
By applying DCT with the integrable bounds derived in \eqref{phi-bound} and \eqref{psi-bound}, we derive
\begin{equation*}
\liminf_{n\to \infty} \E\big[n^{-d\beta/\alpha}M_n^{\beta}\big]\geq  \tilde{c}^{\beta}_{\bY} C_{\alpha}^{\beta/\alpha}
\E\left[Z_{\alpha/\beta}\right].
\end{equation*}
Combining the above inequalities, we prove \eqref{label-not conservative moments}, that is
$$n^{{-d\beta }/{\alpha}} \E\big[M_{n}^{\beta}\big] \to C \text{ as } n \to \infty.$$

In the case of a conservative action, let $\bW$ be a stationary $\sas$ random field
independent of $\bY$, having a similar integral representation with $\sas$ measure $M'$
on space $S'$ with control measure $\mu'$, independent of $M$ in the integral representation
of $\bY$. That is,
\begin{equation}
\begin{aligned}
W_t 
&= \int_{S'} c'_t(s){\left(\frac{d \mu^{'} \circ \phi^{'}_t}{d
\mu^{'}}(s)\right)}^{1/\alpha}g \circ \phi^{'}_t(s) M'(ds),\;\; t \in
\mathbb{Z}^d. \nonumber
\end{aligned}
\end{equation}
Denoting the above integrand by $g_{t}(s)$, further let $\bW$ be such that the sequence
$$b_{n}^{W}=\left(\int_{S'}\displaystyle\max\limits_{{\bf 0} \leq t \leq (n-1){\bf 1}}|g_{t}(s)|^{\alpha}\mu'(ds)\right)^{1/\alpha},
\;\;\; n\ge 1,
$$
satisfies equation (4.6) in \cite{samorodnitsky:2004a} (which is \textbf{(LB)} in the above) for some $\theta >0$ .\\
Define $ \bZ=\bW+\bY$. Then $\bZ$ inherits its natural integral representation on $S\cup S'$ and the
naturally defined action on that space is a stationary $\sas$ random field generated by a conservative
$\mathbb{Z}^{d}$-action. The deterministic maximal sequence $b_{n}^{Z}$ 
corresponding to conservative $\bZ$ satisfies (4.6) in \cite{samorodnitsky:2004a}  as
$$b_{n}^{Z}\geq b_{n}^{W} \text{ for all } n.$$
Using symmetry, we have
\begin{equation}
\label{label-relation between two processes}
\P\left(M_{n}^{Z}>x\right)\geq \frac{1}{2}\P\left(M_{n}>x\right)
\end{equation}
and
\begin{align}
\E\big[n^{-d\beta/\alpha} M_n^{\beta}\big]
&= \int_{0}^{\infty}\P\left(n^{{-d}/{\alpha}}{M_{n}}>\tau^{1/\beta}\right)d\tau  \nonumber\\
&\leq  2\int_{0}^{1}\P\left((b_n^{Z})^{-1}{M_{n}^Z}>C\tau^{1/\beta}\right)d\tau \nonumber\\
&\qquad + 2\int_{1}^{\infty}\P\left((b_n^{Z})^{-1}{M_{n}^Z}>C\tau^{1/\beta}\right)d\tau \nonumber\\
&= S_n^{(1)} + S_n^{(2)} \nonumber
\end{align}
with the second step following from \eqref{label-relation between two processes} and that $n^{{-d}/{\alpha}}b_n^{Z}$
converges to $0$ and hence is bounded by a constant $1/C$ say.
We use the fact from \cite{samorodnitsky:2004a} that
$$n^{-d/\alpha}M_n \to 0 \text{ as } n \to \infty,$$
under the conditions (4.6) and (4.8) in the afore-mentioned reference
and conclude \eqref{label-conservative moments} via a DCT argument by using the trivial bound
$$\P\left((b_n^{Z})^{-1}{M_{n}^Z}>C\tau^{1/\beta}\right) \le 1$$
for $\tau \in (0,1)$ and obtaining a non-trivial integrable
bound for the same on $(1,\infty)$.
Again with a similar choice of $\epsilon$ as in the dissipative case
we have
\begin{align*}
 \P\big(M_{n}^{Z}>C b_{n}^{Z}\tau^{1/\beta} \big)  &\leq  \P\big(\Gamma_{1}^{-1/{\alpha}}> C\tau^{1/\beta}\epsilon \big)   \\
& \; \; + \P\big(M_{n}^{Z}>C b_{n}^{Z} \tau^{1/\beta}, \;\Gamma_{1}^{-1/{\alpha}}\leq C \tau^{1/\beta}\epsilon\big),
\end{align*}
where $M_{n}^{Z}$ is the maxima, and $b_{n}^{Z}$ is the corresponding deterministic maximal sequence for $\bZ$.
Let $\bZ$ have a series representation in terms of arrival times of a unit Poisson process, $\Gamma_j$ and
Rademacher variables $\xi_j$. Now choose $K$ large enough so that $\alpha(K+1) > d/\theta$. For $p$ satisfying
$$
\frac{d}{\theta} <p<\alpha (K+1),$$
using a technique similar to \eqref{psi-bound} by an application of Markov's inequality,
we derive an integrable upper bound for $\tau \in (1,\infty)$ as
\begin{equation}
\label{bound-tight}
\begin{split}
& \P\left(M_{n}^{Z}>C b_{n}^{Z}\tau^{1/\beta} , \;\;\Gamma_{1}^{-1/{\alpha}}\leq C\tau^{1/\beta} \epsilon\right)   \\
&\leq  n^{d}\P\bigg(\Big\lvert C_{\alpha}^{1/\alpha}\sum_{j=1}^{\infty}\xi_j\Gamma_j^{-1/\alpha}\Big\lvert > \frac{C b_n^{Z} \tau^{1/\beta}}
{\|f^{Z}\|_{\alpha}}, \  C_{\alpha}^{1/\alpha}\Gamma_1^{-1/\alpha}\leq \frac{C b_n^{Z}\tau^{1/\beta}\epsilon}{\|f^{Z}\|_{\alpha}} \bigg)\\
& \leq \,  n^{d}\P\bigg(\Big\lvert C_{\alpha}^{1/\alpha}\sum_{j=1}^{\infty}\xi_j\Gamma_j^{-1/\alpha}\Big\lvert > \frac{C b_n^{Z} \tau^{1/\beta}}
{\|f^{Z}\|_{\alpha}},  \\
& \qquad \qquad \qquad \qquad\;\;\;\;\;\;C_{\alpha}^{1/\alpha}\Gamma_j^{-1/\alpha}\leq \frac{C b_n^{Z}\tau^{1/\beta}\epsilon}{\|f^{Z}\|_{\alpha}}
 \text{ for all } j\in \mathbb{N}\bigg)\\
&\leq  n^{d}\P\bigg(C_{\alpha}^{1/\alpha}\Big|\sum_{j=K+1}^{\infty}\xi_j\Gamma_j^{-1/\alpha}\Big|>C\|f^{Z}\|_{\alpha}^{-1}
b_n^{Z}\tau^{1/\beta}(1-K\epsilon)\bigg) \\
&\leq  n^{d} (b_{n}^{Z})^{-p}C^{p}\dfrac{\|f^{Z}\|_{\alpha}^{p}\E \Big|C_{\alpha}^{1/\alpha}\displaystyle
\sum\limits_{j=K+1}^{\infty}{\epsilon_{j}\Gamma_{j}^{-1/\alpha} \Big|^{p}}}{\tau^{p/\beta}\epsilon^{p}}   \\
&\leq A C^{p} \|f^{Z}\|_{\alpha}^{p} \cfrac{\E \Big|C_{\alpha}^{1/\alpha}\displaystyle
\sum\limits_{j=K+1}^{\infty}{\epsilon_{j}\Gamma_{j}^{-1/\alpha}\Big|^{p}}}{\tau^{p/\beta}\epsilon^{p}}.
\end{split}
\end{equation}

Observing that
$$
\int_{0}^{\infty}{\P\big(\Gamma_{1}^{-1/{\alpha}}> \epsilon\tau^{1/\beta}\big)}d\tau = \epsilon^{-\beta} \E\big[\bZ_{\alpha/\beta}\big]
= \epsilon^{-\beta}{{\Gamma(1-{\beta}/{\alpha})}}<\infty,
$$
and using integrable bound for
$$\P\big(M_{n}^Z>\tau^{1/\beta} b_{n}^Z, \;\;\Gamma_{1}^{-1/{\alpha}}\leq \epsilon\tau^{1/\beta}\big)
$$
as derived in \eqref{bound-tight}, we obtain a nontrivial bound for $S_n^{(2)}$.  Equipped to apply DCT
with the trivial bound $1$ for $S_n^{(1)}$ and an integrable bound for $S_n^{(2)}$, we conclude \eqref{label-conservative moments}. This completes the proof. 

\subsection{Proof of Theorem~\ref{weak:eff:dim}}

The proof again follows by noting that
\begin{equation*}
\begin{split} \label{Eq:md}
\E\big[b_n^{-\beta} M_n^{\beta}\big]
&= \int_{0}^{\infty}\P\big(b_n^{-1} M_n>\tau^{1/\beta}\big)d\tau   \\
&\leq \int_{0}^{\infty}\Big\{\P\big(\Gamma_{1}^{-1/{\alpha}}> \tau^{1/\beta}\epsilon \big)   \\
&\qquad \qquad +\P\big(M_{n}>\tau^{1/\beta} b_{n}, \;\;\Gamma_{1}^{-1/{\alpha}}\leq \tau^{1/\beta}\epsilon\big)\Big\}d\tau   \\
&= \epsilon^{-\beta}{{\Gamma(1-{\beta}/{\alpha})}} +\int_{0}^{1}\P\big(M_{n}>\tau^{1/\beta} b_{n}, \;\;
\Gamma_{1}^{-1/{\alpha}}\leq \tau^{1/\beta}\epsilon\big)d\tau  \\
&\qquad +\int_{1}^{\infty}\P\big(M_{n}>\tau^{1/\beta} b_{n}, \;\;\Gamma_{1}^{-1/{\alpha}}\leq \tau^{1/\beta}\epsilon\big)d\tau.  \\
\end{split}
\end{equation*}
The  integral over $[0, 1]$ is bounded by 1. To bound the integral over $(1, \infty)$,  we choose $K$ large enough so that
$\alpha (K+1) > \frac{\alpha d}{\theta_1}$. Fix $\epsilon$ satisfying $0< \epsilon < \frac{1}{K}$ and $p$ satisfying
$$\frac{\alpha d}{\theta_1}<p<\alpha (K+1).
$$
The same argument as in \eqref{bound-tight}, together with  the lower bound in \eqref{weak:eff}, gives
\[
\P\big(M_{n}>\tau^{1/\beta} b_{n}, \;\;\Gamma_{1}^{-1/{\alpha}}\leq \tau^{1/\beta}\epsilon\big) \le \frac{B}{{\tau^{p/\beta}\epsilon^{p}}},
\]
where
$$B=A \|f\|_{\alpha}^{p} {\E\bigg|C_{\alpha}^{1/\alpha}\displaystyle \sum\limits_{j=K+1}^{\infty}{\epsilon_{j}
\Gamma_{j}^{-1/\alpha}\bigg|^{p}}}.
$$
It follows from above that
\begin{equation*}
\begin{split}
\E\big[b_n^{-\beta} M_n^{\beta}\big] &\leq  \epsilon^{-\beta}{{\Gamma(1-{\beta}/{\alpha})}} +1+
 \int_{1}^{\infty} \frac{B}{\tau^{p/\beta}\epsilon^{p}} d\tau \nonumber \\
&= K_1< \infty.
\end{split}
\end{equation*}
Hence
$$\E\big[ M_n^{\beta}\big]\leq K_1 \cdot b_n^{\beta} \leq K_1c_2 \cdot n^{\beta\theta_2/\alpha} $$
 for all sufficiently large $n$, say $n \ge n_0$. Taking  $K^\prime = \max\{c_2  K_1 ;\E\big[ M_k^{\beta}\big], k \le n_0\}$
 yields \eqref{weak:eff:bound}.

\subsection{Proof of Theorem~\ref{thm3:maximal_moment}}

(1)~When the action $\{\phi_t\}_{t\in F}$ restricted to the free group $F$ is dissipative, then by Proposition~5.1 of \cite{roy:samorodnitsky:2008}, the
sequence $\{b_n\}_{n\geq 0}$ satisfies
$$\lim_{n\to \infty} n^{-p/\alpha} b_n=c, \text{ a constant},$$
which implies that $b_n$ satisfies (4.6) in \cite{samorodnitsky:2004a} with $\theta=p/\alpha$. Also, (4.17) of \cite{roy:samorodnitsky:2008} holds; see the proof of Theorem~5.4 in \cite{roy:samorodnitsky:2008}.

Now we choose $K$ such that $\alpha(K+1) > d \alpha/p$, use the same tail bound as in \eqref{bound-ini} and apply DCT using integrable bounds on
$$\phi_n(\epsilon, \tau^{1/\beta})\leq n^{d}b_n^{-\alpha}\cfrac{C_\alpha^{-1}\epsilon^{-\alpha}}{\tau^{\alpha/\beta}}
\leq K_2{C_\alpha^{-1}\epsilon^{-\alpha}}{\tau^{-\alpha/\beta}},
$$
\[
\begin{split}
\psi_n(\epsilon,\delta,\tau)&\leq n^{d}b_n^{-p'}\dfrac{\|f\|_{\alpha}^{p'}\E \bigg|C_{\alpha}^{1/\alpha}\displaystyle
 \sum\limits_{j=K+1}^{\infty}{\epsilon_{j}\Gamma_{j}^{-1/\alpha}\bigg|^{p'}}}{\tau^{p'/\beta}\epsilon^{p'}} \\
&\leq K_3\epsilon^{-p'} \tau^{-p'/\beta},
\end{split}
\]
for $p'$ satisfying $$\frac{d\alpha}{p}\leq p' \leq \alpha (K+1).$$
Then as in the proof of \eqref{label-not conservative moments},
\eqref{label-G, not conservative moments} follows.

\noindent{(2)}~When the action $\left\{\phi_{t}\right\}_{t\in F}$ is conservative, we can obtain a stationary $\sas$ random field $\bZ$
generated by a conservative $\mathbb{Z}^{d}$-action such that $b_{n}^{Z}$ satisfies (4.6) in \cite{samorodnitsky:2004a} for some
$\theta > 0$ and $$n^{-p/\alpha}b_n^{Z} \to 0 \text{ as } n\to \infty.$$ Again by the exact argument used to prove
\eqref{label-conservative moments}, we obtain \eqref{label-G, conservative moments}.





\appendix
\section{Maximal moments for continuous parameter case}
\label{Appendix}
Here we present the theorem on the rate of growth of moments of maximum of $S\alpha S$
 process indexed by continuous time in $\bbR$, which can be easily extended to the class of fields indexed by $\bbR^{d}$ (see Remark~\ref{remark:last} below).

\begin{thm}
\label{cont:parameter:gom}
Let $\bY=\{Y(t)\}_{t\in \bbR}$ be a stationary measurable $S\alpha S$ process with $0<\alpha< 2$ and
having integral representation as
\begin{equation*}
Y(t) \eqdef \int_S f_t(s)M(ds)=\int_S c_t(s){\left(\frac{d \mu \circ \phi_t}{d
\mu}(s)\right)}^{1/\alpha}f \circ \phi_t(s) M(ds),\;\; t \in
\bbR, \label{repn_integral_stationary_cont}
\end{equation*}
where $f \in L^\alpha(S, \mu)$, $\{\phi_t\}_{t \in \mathbb{R}}$ is a nonsingular flow,  $\{c_t\}_{t \in \mathbb{R}}$ is a $\pm$-valued cocycle with respect to  $\{\phi_t\}_{t \in \mathbb{R}}$ and $M$ is an $\SaS$ measure with control measure $\mu$; see \cite{rosinski:1995}.
\begin{enumerate}
\item  If $\bY$ is generated by a dissipative flow, that is $\bY$ has a mixed moving average representation given by
\begin{equation*}
\label{MMR2}
\bY\stackrel{d}{=}\left\{\int_{W\times\mathbb{Z}}f(v,t+s)M(dv,ds)\right\}_{t\in\bbR},
\end{equation*}
then, for $0<\beta<\alpha$,
\begin{equation}
\label{label-not conservative moments-cont}
\E\big[T^{-\beta/\alpha }M_{T}^{\beta}\big] \to C \text{ as } \, T \to \infty,
\end{equation}
where constant $C=c^{\beta}C_{\alpha}^{\beta/\alpha}\E\big[\bZ_{\alpha/\beta}\big]$, with $\bZ$
denoting a Frechet random variable with shape parameter $\alpha/\beta$ and constant
$$c=\lim_{T\to\infty} T^{-1/\alpha}b_T, \text{ and}$$
 $C_\alpha$ is as defined in \eqref{C-alpha}.
 \item If $\bY$ is generated by a conservative flow, then
\begin{equation}
\label{label-conservative moments-cont}
\E\big[T^{{-\beta }/{\alpha}}M_{T}^{\beta}\big] \to 0\ \ \text{ as } \, T \to \infty. \\
\end{equation}
\end{enumerate}
\label{label-Continuous MoM}
\end{thm}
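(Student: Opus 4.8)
The plan is to transcribe the proof of Theorem~\ref{thm1:maximal_moment} (the $d=1$ case), the only genuinely new ingredient being a reduction of the continuous-time maximal functional $M_T=\sup_{0\le t\le T}|Y(t)|$ to a discrete one. Since $\bY$ is measurable and stationary it admits a separable version, so for a fixed countable dense set $D\subseteq[0,T]$ one has $M_T=\sup_{t\in D}|Y(t)|$ almost surely; in particular $s\mapsto\sup_{0\le t\le T}|f_t(s)|$ is measurable, and since the analogue of \eqref{label-quantity that controls maximal functional} holds (see \cite{samorodnitsky:2004b}, \cite{roy:2010b}, \cite{chakrabarty:roy:2013}), the deterministic sequence
\[
b_T=\Big(\int_S\sup_{0\le t\le T}|f_t(s)|^{\alpha}\,\mu(ds)\Big)^{1/\alpha}
\]
is finite and satisfies $T^{-1/\alpha}b_T\to c$ with $c>0$ in the dissipative case and $T^{-1/\alpha}b_T\to 0$ in the conservative case; moreover $T^{-1/\alpha}M_T\Rightarrow c\,Z_\alpha$ (resp.\ $\Rightarrow 0$) as $T\to\infty$, with $Z_\alpha$ as in \eqref{cdf_of_Z_alpha}. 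Equivalently, one may first pass to the stationary sequence $\zeta_k=\sup_{u\in[0,1]}|Y(k+u)|$ and observe that $M_T$ is sandwiched between $\max_{0\le k\le\lfloor T\rfloor}\zeta_k$ and $\max_{0\le k\le\lceil T\rceil}\zeta_k$, so that the problem becomes a discrete-parameter one governed by $\{\phi_k\}_{k\in\mathbb Z}$ acting via the ``supremum over a unit cell'' functional.

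Granting this, part (1) is proved exactly as \eqref{label-not conservative moments}. Fix $0<\beta<\alpha$. For $\{Y(t):0\le t\le T\}$ I would use the series representation
\[
Y(t)\eqdef b_T\,C_\alpha^{1/\alpha}\sum_{j=1}^{\infty}\xi_j\Gamma_j^{-1/\alpha}\,\frac{f_t(U_j^{(T)})}{\sup_{0\le m\le T}|f_m(U_j^{(T)})|},
\]
where $\{\Gamma_j\}$, $\{\xi_j\}$, $\{U_j^{(T)}\}$ are as in Section~\ref{sec:proofs} with $U_j^{(T)}$ distributed according to the probability measure with density $b_T^{-\alpha}\sup_{0\le t\le T}|f_t(\cdot)|^{\alpha}$. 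Since $T^{-1/\alpha}b_T\to c>0$, the sequence $b_T$ satisfies \textbf{(LB)} with $\theta=1/\alpha$ and \textbf{(UB)} holds for every $p>\alpha$; the condition \textbf{(LL)} follows from dissipativity of the flow exactly as in Remark~4.2 of \cite{samorodnitsky:2004a} and its continuous-time counterpart in \cite{samorodnitsky:2004b}. Hence the tail bounds \eqref{bound-ini} and \eqref{bound-low} hold with $n^d$ replaced by $\lceil T\rceil$ throughout the definitions of the error terms $\phi$ and $\psi$, and, writing $\E[b_T^{-\beta}M_T^{\beta}]$ as an integral of its tail as in \eqref{MoM-1}, the three resulting terms are controlled by the very same integrable dominating functions produced in \eqref{phi-bound2} and \eqref{psi-bound} (with $T\,b_T^{-\alpha}\le A$ from \textbf{(UB)} in place of $n^{d}b_n^{-\alpha}\le A$). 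The dominated convergence theorem then forces both error terms to $0$, and letting $\delta\to 0^{+}$ together with $T^{-1/\alpha}b_T\to c$ yields $T^{-\beta/\alpha}\E[M_T^{\beta}]\to c^{\beta}C_\alpha^{\beta/\alpha}\E[Z_{\alpha/\beta}]$, which is \eqref{label-not conservative moments-cont}.

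For part (2) I would repeat the auxiliary-process device used to prove \eqref{label-conservative moments}: choose a stationary $\sas$ process $\bW$ independent of $\bY$, generated by a conservative flow, with $b_T^{W}$ satisfying \textbf{(LB)} for some $\theta>0$, and set $\bZ=\bW+\bY$, which is generated by a conservative flow and satisfies $b_T^{Z}\ge b_T^{W}$. Symmetry gives $\P(M_T^{Z}>x)\ge\tfrac12\P(M_T>x)$ as in \eqref{label-relation between two processes}, and since $T^{-1/\alpha}M_T^{Z}\to 0$ in probability (the conservative case of the continuous-time result in \cite{samorodnitsky:2004b}), expressing $\E[T^{-\beta/\alpha}M_T^{\beta}]$ as an integral of its tail, bounding the integrand by $1$ on $(0,1)$ and, on $(1,\infty)$, by the integrable dominating function obtained from a Markov-inequality estimate on the tail of the series representation of $\bZ$ exactly as in \eqref{bound-tight} (with $K$ chosen so that $\alpha(K+1)>1/\theta$ and $p$ with $1/\theta<p<\alpha(K+1)$), the dominated convergence theorem gives $T^{-\beta/\alpha}\E[M_T^{\beta}]\to 0$, i.e.\ \eqref{label-conservative moments-cont}.

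The main obstacle is the first step: making sense of, and controlling, the supremum over the continuum within the Poisson-series framework. This requires the separability/measurability reduction above, the finiteness of $b_1$, and the continuous-parameter versions of the ergodic inputs \textbf{(LB)}, \textbf{(LL)} and of the limit law $T^{-1/\alpha}M_T\Rightarrow c\,Z_\alpha$ (or $0$)---all of which are supplied by \cite{samorodnitsky:2004b}, \cite{roy:2010b} and \cite{chakrabarty:roy:2013}. Once these are in place the remaining arguments (the Poisson-random-measure estimates and the dominated convergence steps) carry over essentially verbatim from the proof of Theorem~\ref{thm1:maximal_moment}, since they only involve the one-dimensional reference Poisson measure with mean measure \eqref{mean-measure-discrete} and not the index set itself.
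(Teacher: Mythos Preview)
Your overall architecture matches the paper's proof closely: reduce to the continuous-time analogues of \textbf{(LB)}, \textbf{(LL)} and \textbf{(UB)} via \cite{samorodnitsky:2004b}, invoke the series representation, write $\E[b_T^{-\beta}M_T^\beta]$ as a tail integral, split into the Fr\'echet term plus error terms $\phi_T$ and $\psi_T$, and apply DCT; the conservative case is handled with the same auxiliary-process device. So far so good.

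The gap is in your treatment of $\psi_T$. You assert that ``the error terms $\phi$ and $\psi$ \ldots\ [are] controlled by the very same integrable dominating functions produced in \eqref{phi-bound2} and \eqref{psi-bound}'' and that the remaining arguments ``only involve the one-dimensional reference Poisson measure \ldots\ and not the index set itself.'' For $\phi_T$ this is essentially right, but for $\psi_T$ it is not. In the discrete proof, after the union bound over the $n^d$ indices, one uses that for each fixed $k$ the points $b_n\xi_j\Gamma_j^{-1/\alpha}f_k(U_j^{(n)})/\max_m|f_m(U_j^{(n)})|$ have the same law as $\xi_j\|f\|_\alpha\Gamma_j^{-1/\alpha}$, and then the scalar moment bound $\E\big|\sum_{j>K}\xi_j\Gamma_j^{-1/\alpha}\big|^p<\infty$ for $\alpha<p<\alpha(K+1)$ finishes via Markov. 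In the continuous case, after the stationarity reduction (which replaces $n^d$ by $\lfloor T\rfloor$), you are left with
\[
\P\bigg(\sup_{t\in[0,1]}\Big|\sum_{j>K}\xi_j\Gamma_j^{-1/\alpha}\,\frac{f_t(V_j)}{\sup_{s\in[0,1]}|f_s(V_j)|}\Big|>\cdots\bigg),
\]
and the supremum over the continuum does \emph{not} reduce to the scalar series; the polynomial-moment trick from \eqref{psi-bound} does not apply because $\E\big[\sup_{t\in[0,1]}|\sum_{j>K}\cdots|^p\big]$ is not furnished by the scalar estimate (and the crude bound $\sup_t|\cdot|\le\sum_{j>K}\Gamma_j^{-1/\alpha}$ diverges for $\alpha\ge 1$). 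The paper handles this by following (2.29)--(2.33) of \cite{samorodnitsky:2004b} and applying an \emph{exponential} Markov inequality, which yields an integrable dominating function of a different shape than \eqref{psi-bound}. Your alternative reduction via $\zeta_k=\sup_{u\in[0,1]}|Y(k+u)|$ does not sidestep this either, since $\{\zeta_k\}$ is not $\sas$ and Theorem~\ref{thm1:maximal_moment} cannot be invoked directly. Once you replace the $\psi_T$ step by the exponential bound from \cite{samorodnitsky:2004b}, the rest of your argument goes through exactly as you describe.
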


\begin{proof} Stationarity and measurability together implies continuity in probability for stable processes (see Proposition~3.1 of \cite{roy:2010b}). Therefore following \cite{samorodnitsky:2004b}, we shall approximate the stable process (and all of its functionals) by its dyadic skeletons even without writing it explicitly at times. This will ensure, in particular, that every quantity considered in this proof is measurable.

As in the proof of Theorem 3.1, we consider cases (1) and (2) separately. When $\bY$ is generated by a dissipative flow, the deterministic family
 $$\{b_T\}_{T\geq 0}=\left\{\left(\int_{S}\sup_{0\leq t \leq T}|f_t(s)|^{\alpha} \mu(ds)\right)^{1/\alpha}\right\}_{T\geq 0}$$
 satisfies
$ \lim_{T\to\infty} T^{-1/\alpha}b_T=c, \text{ a constant}.$ The above implies that $b_T$ satisfies conditions (2.9) with $\theta=1/\alpha$ and (2.12)  in \cite{samorodnitsky:2004b}, analogous to \textbf{(LB)} and \textbf{(LL)} in Theorem \ref{label-Discrete MoM} for fields indexed by $\mathbb{Z}^d$.
For a choices of $\epsilon>0$ and $0<\delta<1$ such that $\epsilon$ is chosen small enough as compared to $\delta$
 and for $K=0,1,2, ...$ satisfying
$$K< \cfrac{1}{\epsilon C_\alpha^{1/\alpha}},$$
we bound the tail distribution of $b_T^{-1} M_T$  as
\begin{equation}
\label{bound-ini-cont}
\P \big(b_T^{-1} M_T> \lambda \big)\leq \P\big(C_{\alpha}^{1/\alpha}\Gamma_1^{-1/\alpha}>\lambda(1-\delta)\big)
+\phi_T (\epsilon,\lambda)+\psi_T (\epsilon,\delta,\lambda),
\end{equation}
taken from \cite{samorodnitsky:2004b}.
The quantities $\phi_T$ and $\psi_T$ in \eqref{bound-ini-cont} are defined and bounded as follows:
\begin{equation}
\label{phi-cont}
\begin{split}
\phi_T (\epsilon,\lambda) &= \P\bigg( \text{for some } t\in [0,T], \ \dfrac{ \Gamma_{j}^{-1/\alpha} |f_t(U_j^{(T)})|}{\sup_{s\in [0,T]}|f_s(U_j^{(T)})|}>\epsilon \lambda  \\
&\qquad \;\;\qquad   \qquad \qquad \text{ for at least 2 different } j \bigg)  \\
&\leq  \lfloor T \rfloor \P\bigg( \;\Gamma_{j}^{-1/\alpha}\sup_{0\leq t \leq 1}\dfrac{|f_t(U_j^{(T)})|}{\sup_{s\in [0,T]}|f_s(U_j^{(T)})|}>\epsilon \lambda \\
&\qquad \qquad \qquad \qquad \;\; \text{ for at least 2 different } j\bigg),
\end{split}
\end{equation}
where $\lfloor T \rfloor$ denotes the smallest integer $\ge T$ and the inequality follows from the same
argument as in (2.26) of \cite{samorodnitsky:2004b}. Furthermore, the random points
$$b_T\Gamma_{j}^{-1/\alpha}\dfrac{|f_t(U_j^{(T)})|}{\sup_{s\in [0,T]}|f_s(U_j^{(T)})|}, \quad j = 1, 2, \ldots$$
have the same distribution as
$$Z_j(t)=b_1\Gamma_{j}^{-1/\alpha}\dfrac{|f_t(V_j)|}{ \sup_{s\in [0,1]} |f_s(V_j)|}, \quad j = 1, 2, \ldots,$$
where $\{V_j\}$ is identically distributed as $\{U_j^{(1)}\}$ and independent of $\{\Gamma_j\}$. This
and \eqref{phi-cont} imply that
\begin{equation*}
\label{phi-cont2}
\begin{split}
\phi_T (\epsilon,\lambda)  &\le \lfloor T \rfloor \P\bigg( \;b_1\Gamma_{j}^{-1/\alpha}\sup_{0\leq t \leq 1}
\dfrac{|f_t(V_j)|}{ \sup_{s\in [0,1]} |f_s(V_j)|}>b_T\epsilon \lambda\nonumber \\
&\qquad \qquad \qquad \qquad  \;\; \text{ for at least 2 different } j \bigg) \nonumber \\
&=  \lfloor T \rfloor \P\left(\sum_{j=1}^{\infty} 1_{\{\sup_{t\in [0,1]} |Z_j(t)|\}}(b_T\epsilon \lambda,\infty)\geq 2\right).
\end{split}
\end{equation*}
For set of interest
$$
B(T)=\bigg\{(z(t); t\in [0,1]): \sup_{t\in [0,1]}|z(t)|>b_T\epsilon \lambda\bigg\},
$$
$\{Z_j(t), j \ge 1\}$ 
 are points of a Poisson random measure with mean measure
 $$\Lambda(B(T))=\left(\cfrac{b_T\epsilon\lambda}{b_1}\right)^{-\alpha}.$$
Using the fact that $[T]b_T^{-\alpha}\leq K_4 \text{ a constant},$
we have
\begin{equation}
\label{phi-cont1}
\phi_T (\epsilon,\lambda)\leq K_4 b_1^{\alpha}\epsilon^{-\alpha}\lambda^{-\alpha}.
\end{equation}

Similarly as above,  we have for $0<\epsilon<\delta/K$,
\begin{equation}
\label{psi-cont}
\begin{split}
\psi_T (\epsilon,\delta,\lambda) &= \P\bigg(b_T\sup_{t\in [0,T]}\bigg\lvert \sum_{j=1}^{\infty}\xi_j
\Gamma_j^{-1/\alpha}  \cdot \dfrac{|f_t(U_j^{(t)})|}{\sup_{s\in [0,T]}|f_s(U_j^{(s)})|}\bigg\lvert
  > C_{\alpha}^{-1/\alpha}b_T \lambda; \\
  &\qquad \quad \quad  \qquad \;\; b_1\Gamma_1^{-1/\alpha}\leq b_T\lambda(1-\delta) \,\text{ and } b_1\Gamma_2^{-1/\alpha}\leq b_T\lambda\epsilon  \bigg)\nonumber \\
&\leq  \lfloor T \rfloor \P\bigg(b_1\sup_{t\in [0,1]}\bigg\lvert \sum_{j=1}^{\infty} \xi_j
\Gamma_j^{-1/\alpha}  \cdot  \dfrac{ |f_t(V_j)|}{\sup_{s\in [0,1]}|f_s(V_j)|}\bigg\lvert   > C_{\alpha}^{-1/\alpha}b_T \lambda; \\
&  \qquad \quad \quad  \qquad \;\; b_1\Gamma_1^{-1/\alpha}\leq b_T\lambda(1-\delta) \,
 \text{ and } b_1\Gamma_2^{-1/\alpha}\leq b_T\lambda\epsilon  \bigg).\nonumber \\
\end{split}
\end{equation}
Using the same argument as in (2.29) - (2.33) of \cite{samorodnitsky:2004b}, leveraging on the observation
that
$$b_T\Gamma_{j}^{-1/\alpha}\dfrac{|f_t(U_j^{(T)})|}{\sup_{s\in [0,T]}|f_s(U_j^{(T)})|}$$
are identically distributed as $Z_j(t)$ and applying an exponential Markov inequality in the penultimate step,
we derive
\begin{equation}
\label{psi-cont2}
\begin{split}
\psi_T (\epsilon,\delta,\lambda) &\leq \lfloor T \rfloor\P\bigg(\sup_{t\in [0,1]}\bigg\lvert \sum_{j=K+1}^{\infty}\xi_j \Gamma_j^{-1/\alpha}\cdot\dfrac{|f_t(V_j)|}{\sup_{s\in [0,1]}|f_s(V_j)|}\bigg\lvert \nonumber\\
&\qquad \qquad \qquad \qquad \;\;\;\; \;\;\;\;>b_T\big(1-\epsilon C_\alpha^{1/\alpha}\big)b_1^{-1} C_\alpha^{-1/\alpha}\lambda\bigg)
\nonumber \\
&\leq 4 \lfloor T \rfloor \int_{0}^{\infty}\exp(-x) \frac{x^K}{K!}\exp\bigg\{-\cfrac{\big(1-\epsilon C_\alpha^{1/\alpha}\big) \lambda \log 2}
{(\gamma+2x^{-1/\alpha}b_T)b_1C_\alpha^{1/\alpha}}  \bigg\} dx\nonumber \\
&\leq  4\lfloor T \rfloor\bigg(C_1 \exp(-\zeta(\lambda)T^{\theta})  +  \int_{0}^{1}  \frac{x^K}{K!} \exp\big(-x -C_2\lambda x^{1/\alpha}T^{\theta}
\big)dx\bigg),
\end{split}
\end{equation}
where $\zeta(\lambda)$ is an increasing function of $\lambda$.

For any $0<\beta<\alpha$, using the tail bound in \eqref{bound-ini-cont} we have
\begin{equation}
\label{MoM-1-cont}
\begin{split}
\E\big[b_T^{-\beta} M_T^{\beta}\big] &= \int_0^\infty \P \big(b_T^{-1} M_T> \tau^{1/\beta}\big)d\tau \nonumber \\
&\leq   \int_0^\infty \P \big(C_{\alpha}^{1/\alpha}\Gamma_1^{-1/\alpha}>\tau^{1/\beta}(1-\delta)\big)d\tau \nonumber \\
&\qquad + \int_0^\infty \phi_T (\epsilon,\tau^{1/\beta})d\tau +\int_0^\infty \psi_T (\epsilon,\delta,\tau^{1/\beta})d\tau  \nonumber \\
&= T_1(\delta) + T_2^{(T)}(\epsilon)  + T_3^{(T)}(\epsilon, \delta).
\end{split}
\end{equation}
Again from \cite{samorodnitsky:2004b}, we know that, as $T \to \infty$, $\phi_T (\epsilon,\tau)$ and
$\psi_T (\epsilon,\delta,\tau^{1/\beta})$ converge to $0$  point-wise for all $\tau\in [0,\infty).$ Hence
using the integrable bounds derived in \eqref{phi-cont} and \eqref{psi-cont} on $(1,\infty)$ and the
trivial bound $1$ on $(0,1)$, we apply DCT to conclude that $$T_2^{(T)}(\epsilon),\;T_3^{(T)}(\epsilon, \delta)
\to 0 \text{ as } T\to \infty,$$
which gives
\begin{equation}
\begin{aligned}
\limsup_{T\to \infty} \E\big[b_T^{-\beta}M_T^{\beta}\big]&\leq  \int_{0}^{\infty}\P(\Gamma_1^{-1/\alpha}>C_\alpha^{-1/\alpha}\tau^{1/\beta}(1-\delta))d\tau \nonumber\\
&= \int_{0}^{\infty} \left(1-\exp\big(-C_\alpha \tau^{-\alpha/\beta}(1-\delta)^{-\alpha}\big)\right)d\tau.
\end{aligned}
\end{equation}
By letting $\delta\to 0^{+}$, and applying DCT again gives
$$\limsup_{T\to \infty}  \E \big[T^{-\beta/\alpha}M_T^{\beta}\big] \leq  c^{\beta}C_{\alpha}^{\beta/\alpha}\E\left[\bZ_{\alpha/\beta}\right].$$
On the other hand, we can use a similar a lower tail bound
\begin{equation*}
\E\big[b_T^{-\beta} M_T^{\beta}\big] \geq T_1(\delta) - T_2^{(T)}(\epsilon)  - T_3^{(T)}(\epsilon, \delta).
\end{equation*}
and applying DCT with the integrable bounds derived in \eqref{phi-cont} and \eqref{psi-cont}, we have
\begin{equation*}
\label{liminf}
\liminf_{T\to \infty} \E\big[T^{-\beta/\alpha}M_T^{\beta}\big]\geq  c^{\beta}C_{\alpha}^{\beta/\alpha} \E\left[\bZ_{\alpha/\beta}\right].
\end{equation*}
This concludes the proof of \eqref{label-not conservative moments-cont}.

\noindent{(2).} Consider a stationary $S\alpha S$ random field $\bW$ independent of $\bY$, also given by the integral
representation of the form
$$ \bW= \int_{S'}g_{t}(s)M'(ds),\;\;\;t\in \mathbb{R},$$
where $M'$ is a $S\alpha S$ random measure with control measure $\mu'$, independent of $M$ in the integral
representation of $\bY$
and generated by a conservative flow and also satisfying
\begin{equation*}
\label{cond-1-cont}
b^W_T\geq cT^{\theta} \text{ for sufficiently large } T
\end{equation*}
for some $\theta >0$.
Define $\bZ=\bY+\bW$, a stationary $S\alpha S$ random process generated by a conservative $\mathbb{R}$-action with the natural integral representation on $S\cup S'$ corresponding to the naturally defined action on that space. Let ${b}^{Z}_T$ be the corresponding deterministic maximal quantity defined for the process $Z$. As ${b}_{T}^Z\geq {b}_{T}^Y$ for all $T>0$, the conservative process $Z$ satisfies \eqref{cond-1-cont}.
\begin{equation*}
\begin{split}
\E\big[T^{-\beta/\alpha} M_T^{\beta}\big]
&= \int_{0}^{\infty} \P\big(T^{{-1}/{\alpha}}{M_{T}}>\tau^{1/\beta}\big)d\tau  \nonumber\\
&\leq  2\int_{0}^{1} \P\big((b_T^{Z})^{-1}{M_{T}^Z}>C\tau^{1/\beta}\big)d\tau \nonumber\\
&\qquad \qquad + 2\int_{1}^{\infty}\P\big((b_T^{Z})^{-1}{M_{T}^Z}>C\tau^{1/\beta}\big)d\tau \nonumber\\
&= S_T^{(1)} + S_T^{(2)}
\end{split}
\end{equation*}
with the second step following from symmetry and the fact $T^{{-1}/{\alpha}}b_T^{Z}$ is bounded by $C^{-1}$,
a constant. Using the bounding technique in \eqref{psi-cont}, we have a similar integrable bound for
$$
\P\big(M_{T}^{Z}>\tau^{1/\beta} b_{T}^{Z}, \;\;\Gamma_{1}^{-1/{\alpha}}\leq\tau^{1/\beta} \epsilon\big),
$$
which leads to \eqref{label-conservative moments-cont} by a similar DCT argument using the fact from Theorem 2.2 of \cite{samorodnitsky:2004b}  that $\P\big(T^{-1/\alpha}M_T>\tau^{1/\beta}\big) \to 0  \text{ as } T\to \infty.$
\end{proof}

\begin{remark}\label{remark:last}
  \textnormal{The results presented in this section can easily be extended to stationary measurable symmetric
  $\alpha$-stable random fields indexed by $\mathbb{R}^d$. For simplicity of presentation, we only dealt with the $d=1$ case here. This extension to higher dimension can be done using the techniques of \cite{roy:2010b} and \cite{chakrabarty:roy:2013}. More specifically, the idea is to approximate the continuous parameter random field $\{X_t\}_{t \in \mathbb{R}^d}$ by its discrete parameter skeletons $\{X_t\}_{t \in  2^{-i}\mathbb{Z}^d}$, $i=0,1,2, \ldots$.}

  \textnormal{In \cite{chakrabarty:roy:2013}, the notion of effective dimension was extended to the continuous parameter case based on the following observation: the effective dimensions of $\{X_t\}_{t \in  2^{-i}\mathbb{Z}^d}$, $i=0,1,2, \ldots$ are equal and hence can be defined as the \emph{group theoretic dimension} of $\{X_t\}_{t \in \mathbb{R}^d}$. With this definition, Theorem~\ref{cont:parameter:gom} can be extended to the higher-dimensional case connecting the rate of growth of maximal moments to the group theoretic dimension $p$. We can also define a continuous parameter analogue of weak effective dimension and relate it to the asymptotic properties of the maximal moments. In summary, all the results presented in Section~\ref{sec:discrete} above can be rewritten for stationary measurable $\sas$ random fields indexed by $\mathbb{R}^d$.}
\end{remark}
\end{document}